\newtheorem{proof}{Proof.}}
\newtheorem{theo}{Theorem}[section]
{\theorembodyfont{\upshape}\newtheorem{defi}[theo]{Definition}}
\newtheorem{lemm}[theo]{Lemma}
\newtheorem{prop}[theo]{Proposition}
\newtheorem{cor}[theo]{Corollary}
\newtheorem{rem}[theo]{Remark}}
\newtheorem{exam}[theo]{Example}}
\newtheorem{shomei}{Proof of Theorem\,\ref{thm1.1}}}
\newtheorem{shomeip}{Proof of Corollary\,\ref{cor1.2}}}
\newtheorem{shomeio}{Proof of Proposition\,\ref{prop4.6}}}
\newtheorem{shomeiu}{Proof of Proposition\,\ref{prop4.16}}}
\newtheorem{shomeiy}{Proof of Proposition\,\ref{prop4.10}}}
\def\qed{\hfill $\Box$}
\DeclareMathOperator{\pr}{pr}
\DeclareMathOperator{\Lie}{Lie}
\DeclareMathOperator{\M}{\mathcal{M}}
\DeclareMathOperator{\D}{\mathcal{D}}
\DeclareMathOperator{\W}{\mathsf{W}}
\DeclareMathOperator{\T}{\mathsf{RibTree}} \DeclareMathOperator{\Tt}{\mathsf{RibTree}^{\mathsf{3}}}
\DeclareMathOperator{\Tc}{\mathsf{Corolla}}
\DeclareMathOperator{\p}{\mathsf{cont}}
\DeclareMathOperator{\fr}{\mathsf{f}_{\mathsf{re}}} \DeclareMathOperator{\mg}{\mathsf{m}_{\mathsf{G}}} \DeclareMathOperator{\ms}{\mathsf{m}_{\mathsf{SO(3)}}}
\DeclareMathOperator{\OO}{\mathsf{O}} \DeclareMathOperator{\PP}{\mathsf{P}} \DeclareMathOperator{\f}{\mathsf{f}} \DeclareMathOperator{\g}{\mathsf{g}}
\DeclareMathOperator{\Li}{\mathcal{L}} \DeclareMathOperator{\C}{\mathcal{C}}
\DeclareMathOperator{\Oh}{\mathcal{O}}
\DeclareMathOperator{\R}{\mathbb{R}}
\DeclareMathOperator{\Z}{\mathbb{Z}}
\newcommand{\He}{H_{\mathrm{ext}}}
\newcommand{\Ei}{E_{\mathrm{int}}}
\newcommand{\fk}{\mathsf{f}_{\mathsf{K}\ddot{\mathsf{a}}\mathsf{h}}}
\newcommand{\unit}{\mbox{1}\hspace{-0.25em}\mbox{l}}
\title{Operad Structures in Geometric Quantization \\of the Moduli Space of Spatial Polygons}
\author{Yuya Takahashi}
\date{}
\begin{document}

\maketitle

\begin{abstract}
The moduli space of spatial polygons is known as a symplectic manifold equipped with both K\"ahler and real polarizations. In this paper, associated to the K\"ahler and real polarizations, 
morphisms of operads $\fk$ and $\fr$ are constructed by using the quantum Hilbert spaces $\mathscr{H}_{\mathrm{K}\ddot{\mathrm{a}}\mathrm{h}}$ and $\mathscr{H}_\mathrm{re}$, respectively. Moreover, the relationship between the two morphisms of operads $\fk$ and $\fr$ is studied and then the equality $\dim \mathscr{H} _{\mathrm{K}\ddot{\mathrm{a}}\mathrm{h}}=\dim \mathscr{H}_\mathrm{re}$ is proved in general setting. This operadic framework is regarded as a development of the recurrence relation method by Kamiyama\,\cite{kami} for proving $\dim \mathscr{H}_{\mathrm{K}\ddot{\mathrm{a}}\mathrm{h}}=\dim \mathscr{H}_\mathrm{re}$ in a special case. 

\unmarkedfntext{2020 Mathematics Subject Classification. Primary 53D50; Secondary 53D20, 18M60.}
\unmarkedfntext{Key Words and Phrases. Geometric quantization, Momentum maps; symplectic reduction, Operads (general).}
\end{abstract}

\section{Introduction}

Geometric quantization is one formulation of quantization in terms of symplectic geometry. The first step in the formulation is to construct a certain vector space $\mathscr{H}$ called a quantum Hilbert space from a given symplectic manifold $(M,\,\omega)$. From a general framework of the Souriau-Kostant prequantization (see \cite[Subsection 2.2]{kiri} for example), the symplectic structure $\omega$ is required to define an integral cohomology class, which enables us to take a prequantum line bundle $L\to (M,\,\omega)$, that is, a complex line bundle $L$ over $M$ such that the first Chern class is given by the cohomology class of $\omega$. Moreover, an additional datum called a {\it polarization} is needed to construct the quantum Hilbert space $\mathscr{H}$ and then the vector space is defined as a space of flat sections of $L$ along the polarization. Based on the perspective of physics, whatever polarization we choose, the resulting vector space $\mathscr{H}$ is believed to be unique up to isomorphism. We refer to this guiding principle as the principle of ``invariance of polarization'' after Guillemin and Sternberg\,\cite{GC}. 

Among polarizations, the following two types are important: a {\it K\"ahler polarization} and a {\it real polarization}. A {\it K\"ahler polarization} is given by a compatible complex structure of $(M,\,\omega)$ and then, the quantum Hilbert space is defined to be the space of holomorphic sections of $L$
\[
  \mathscr{H}_{\mathrm{K}\ddot{\mathrm{a}}\mathrm{h}}\,=\, H^0(M,\,\mathcal{O}_L).
\]

On the other hand, a {\it real polarization} is given by a (singular) Lagrangian fibration $\pi:M\to B$ over a manifold $B$ of the half real dimension of $M$ and then, the quantum Hilbert space is defined to be 
\[
  \mathscr{H}_\mathrm{re}\,=\, \bigoplus _{p\in \mathrm{Im}(\pi)} \,\Gamma_{\mathrm{flat}}(L|_{{\pi}^{-1}(p)}),
\]
where $\Gamma_{\mathrm{flat}}(L|_{\pi^{-1}(p)})$ is the space of global flat sections of the restriction of $L$ to $\pi^{-1}(p)$. A point $p\in \mathrm{Im}(\pi)$ is called a Bohr-Sommerfeld
point if the space $\Gamma_{\mathrm{flat}}(L|_{\pi^{-1}(p)})$ is non-trivial. Let $BS$ denote the set of Bohr-Sommerfeld points. Now, by the principle of ``invariance of polarization'', we expect the following equalities
\[
	\dim H^0(M,\,\mathcal{O}_L)\,=\,\dim\mathscr{H}_{\mathrm{K}\ddot{\mathrm{a}}\mathrm{h}}\,=\,\dim\mathscr{H}_\mathrm{re}\,=\,\#\,BS
\]
and in fact, this is observed rigorously in several cases. The typical example is the case when $M$ is a toric manifold, where we consider a toric manifold as a symplectic manifold equipped with both K\"ahler and real polarizations by its canonical K\"ahler structure and the momentum map of the Hamiltonian torus action. The cases when $M$ is an ``almost'' toric manifold such as a complex flag manifold with the Gelfand-Cetlin system\,\cite{GC} or the moduli space of $SU(2)$-flat bundles on a compact Riemann surface with the Goldman system\,\cite{JW} are also known. In these examples, a Bohr-Sommerfeld point can be characterized as a lattice point in (the closure of) the moment polytope. In this paper, we focus on the case of ``almost'' toric manifolds called the {\it moduli space of spatial polygons with the bending system}. From now on, we consider $\dim\mathscr{H}_\mathrm{re}$ as the number of lattice points in (the closure of) the moment polytope. 

Let $n\geq 3$ and $\bm{r}=(r_0,\,\dots,\,r_{n-1})\in\mathbb{R}^n _{> 0}$. The {\it moduli space of spatial $n$-gons with edge-lengths $\bm{r}$} or simply the {\it polygon space} is defined as the following space
\[
  \M(\bm{r}) =\bigl\{\bm{u}=(u_0,\,\dots,\,u_{n-1})\in S^2(r_0)\times\cdots \times S^2(r_{n-1})\,\bigl|\, u_0 +\cdots + u_{n-1} = 0\bigr\}\bigl/{SO(3)},
\]
where $S^2(r_i)$ is a sphere of radius $r_i$ in $\mathbb{R}^3$ with the standard $SO(3)$-action and the quotient is taken by the diagonal action. (Unless otherwise noted, we assume that $\M(\bm{r})$ is not empty.) Here we assume 
\begin{equation}
  \pm r_0\pm\cdots\pm r_{n-1}\neq 0, \label{eq0000}
\end{equation}
which guarantees that $\M(\bm{r})$ is a smooth manifold of real dimension $2n-6$.
Then the integral condition on the edge-length $\bm{r}\in\mathbb{Z}^n _{> 0}$ together with the condition\,(\ref{eq0000}) endows the polygon space $\M(\bm{r})$ with a natural setting of geometric quantization via a K\"ahler polarization, namely a K\"ahler structure and a prequantum line bundle $\Li(\bm{r})\to \M(\bm{r})$ (see Subsection 3.1).

On the other hand, a real polarization on the polygon space was introduced by Kapovich and Millson\,$\cite{kapo}$. They considered the functions 
\begin{equation}
b_i : \M(\bm{r})\longrightarrow\R\,\,;\,\,\,[\bm{u}]\longmapsto\|u_0+\cdots+u_i\|\label{eq01}
\end{equation}
of the $i$-th diagonal length for $i=1,\dots,n-3$ and constructed a Hamiltonian $(n-3)$-torus action on an open dense subset $\M^{\prime}(\bm{r})$ of $\M(\bm{r})$ such that the momentum map is given by the restriction of the following map to $\M^{\prime}(\bm{r})$
\begin{equation}
\pi^{\bm{r}} =(b_1,\dots,b_{n-3}):\M(\bm{r})\longrightarrow \R^{n-3},\label{eq00}
\end{equation}
which is called the {\it bending system}. In this sense, polygon space $\M(\bm{r})$ with the bending system can be considered as an ``almost'' toric manifold and then the number of lattice points in the closure of the moment polytope is given by $\#\ \mathrm{Im} (\pi^{\bm{r}}) \cap \Z^{n-3}$. 

The equation $\dim\mathscr{H}_{\mathrm{K}\ddot{\mathrm{a}}\mathrm{h}}=\dim\mathscr{H}_\mathrm{re}$ on the polygon space $\M(\bm{r})$ was first obtained by Kamiyama\,\cite{kami}, when $n\geq 5$ is odd and $\bm{r}=(1,\dots,1)$. We note that the condition\,(\ref{eq0000}) on the edge-lengths is automatically satisfied in this case. 

\begin{theo}{\bf (Kamiyama\,\cite[Theorem\,A]{kami})}\label{thm0}
Suppose that $n\geq 5$ is odd and let $\M_n=\M(1,\dots,1)$, $\Li_n=\Li (1,\dots,1)$, and $\pi_n=\pi^{(1,\dots,1)}$. Then we have
	 \[\dim H^0\bigl(\M_n,\Oh_{\Li_n}\bigr) = \#\ \mathrm{Im} (\pi_n) \cap \Z^{n-3}.\]
\end{theo}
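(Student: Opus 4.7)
My plan is to reduce both sides of the identity to explicit combinatorial counts governed by the same recurrence in $n$, then match a small base case and conclude by induction on odd $n\geq 5$. The symmetric edge data $\bm{r}=(1,\ldots,1)$ makes both sides highly tractable.

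\textbf{K\"ahler side.} I realize $\M_n$ as the GIT/symplectic quotient of $\bigl(\mathbb{CP}^1\bigr)^n\cong\bigl(S^2(1)\bigr)^n$ by the diagonal $SO(3)$-action, and $\Li_n$ as the (descent of the) outer tensor product of the $SO(3)$-equivariant line bundle on $\mathbb{CP}^1$ whose space of global sections is the $3$-dimensional (spin-$1$) irreducible representation $V$ of $SO(3)$, corresponding to the weight $r_i=1$. Since (\ref{eq0000}) is automatic when $n$ is odd, the quotient is regular, and the Guillemin--Sternberg ``quantization commutes with reduction'' principle, together with vanishing of the higher cohomology of $\Li_n$, yields
\[
\dim H^0\bigl(\M_n,\Oh_{\Li_n}\bigr)\,=\,\dim\bigl(V^{\otimes n}\bigr)^{SO(3)}.
\]
Iterating the Clebsch--Gordan rule $V\otimes V_k = V_{k-1}\oplus V_k\oplus V_{k+1}$ (with $V_{-1}=0$ and $V_0$ the trivial representation), the right-hand side equals the number of length-$n$ walks on $\Z_{\geq 0}$ from $0$ to $0$ with steps in $\{-1,0,+1\}$, subject to the rule that at level $0$ only the step $+1$ is available. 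Decomposing such a walk at its first return to $0$ yields a linear recurrence in $n$.

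\textbf{Real side.} The image $\mathrm{Im}(\pi_n)\subset\R^{n-3}$ is the polytope carved out by the triangle inequalities of the ``fan'' triangulation by the diagonals $b_1,\ldots,b_{n-3}$: namely $|b_{i-1}-1|\leq b_i\leq b_{i-1}+1$ for $i=1,\ldots,n-2$, with the boundary conditions $b_0=r_0=1$ and $b_{n-2}=r_{n-1}=1$. A lattice point in this polytope is therefore precisely a length-$(n-2)$ walk on $\Z_{\geq 0}$ from $1$ to $1$ with steps in $\{-1,0,+1\}$, and the degenerate triangle $(0,1,b_i)$ forces the same level-$0$ rule as on the K\"ahler side. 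This count satisfies the same recurrence.

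\textbf{Matching and main obstacle.} A length-$n$ K\"ahler walk from $0$ to $0$ must begin $0\mapsto 1$ and end $1\mapsto 0$ by the level-$0$ rule, so its length-$(n-2)$ interior is exactly a real-side walk; this gives an explicit bijection, and in particular identifies the base case (for instance, both sides equal $6$ when $n=5$). The common recurrence then closes the induction. The main obstacle I anticipate is the rigorous invocation of ``quantization commutes with reduction'' for this specific quotient\,---\,one must verify the vanishing $H^{>0}(\M_n,\Oh_{\Li_n})=0$, appeal to the projectivity of $\M_n$ under (\ref{eq0000}) and the ampleness of $\Li_n$, and keep the prequantum-line-bundle conventions consistent so that the weights of $V$ on $\mathbb{CP}^1$ match the edge-length $r_i=1$. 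The combinatorial heart of the argument is then elementary.
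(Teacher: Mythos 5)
Your proposal is correct and its combinatorial core is exactly the content that the paper packages operadically: on the K\"ahler side, the Clebsch--Gordan rule $V_1\otimes V_k = V_{k-1}\oplus V_k\oplus V_{k+1}$ (with $V_1\otimes V_0=V_1$), and on the real side, the triangle inequality $|b_{i-1}-1|\le b_i\le b_{i-1}+1$, each produce the same ``step rule'' on $\Z_{\geq 0}$, which is precisely the agreement ${(\fk)}_2 = {(\fr)}_2$ on the $2$-corolla that the paper checks in Lemmas 3.8 and 4.13(2) and then propagates by Corollary 2.17. But the routes diverge after that. The paper does not prove Theorem 1.1 directly: it cites Kamiyama, and recovers the statement as a special case of the far more general Corollary 1.2, which is derived from the operad morphisms $\fk,\fr$ and the uniqueness Lemma 2.15. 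You instead give a hands-on proof for the caterpillar/fan case only, closer in spirit to Kamiyama's original recurrence; and in fact your observation that a length-$n$ K\"ahler walk from $0$ to $0$ must begin $0\mapsto 1$ and end $1\mapsto 0$, so that its interior is a length-$(n-2)$ walk from $1$ to $1$, already gives an explicit bijection that makes your first-return recurrence and the induction on odd $n$ entirely superfluous --- you can delete those and keep just the bijection. What the paper's approach buys is generality (arbitrary integral $\bm r$ satisfying (1.1) and arbitrary triangulation $T$, not just the caterpillar with $\bm r=(1,\dots,1)$); what your approach buys is brevity in the special case, at the cost of having no mechanism to handle other triangulations, which is exactly the obstruction the paper's operadic framework is designed to remove.

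Two technical points you gesture at deserve more care. First, the identification of $\#\,\mathrm{Im}(\pi_n)\cap\Z^{n-3}$ with the number of integer sequences satisfying the triangle inequalities is not automatic: one must show both that every moment-map image point satisfies them (easy) and, conversely, that every integral solution is realized by an actual polygon (this is the paper's Proposition 4.10, proved via Lemma 4.18 by gluing polygons along a shared edge of matching length, including the degenerate length-$0$ case). Second, your invocation of ``quantization commutes with reduction'' plus separate higher-cohomology vanishing is slightly misorganized: the paper's Proposition 3.2 uses Braverman's theorem (Theorem 3.1) which delivers $H^j(\M_n,\Li_n)=\bigl(H^j(\prod S^2(1),L)\bigr)^{SO(3)}$ in all degrees at once, so the vanishing for $j>0$ falls out of $H^{>0}(S^2,L_1)=0$ with no extra appeal to ampleness or Kodaira--Nakano. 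If you insist on the classical Guillemin--Sternberg statement for $H^0$, that also suffices for the identity you actually use, and the vanishing claim is then simply unnecessary for this theorem.
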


He first derived the recurrence relation\,(\ref{eq4.20}) for the right hand side by changing two integer-valued parameters of the polygon space: the number $n$ of edges and the first edge-length $i$. Then he showed that the left hand side also satisfies the same recurrence relation and hence obtained Theorem\,\ref{thm0}. 

The bending system\,(\ref{eq00}) is known to be generalized to a map $\pi^{\bm{r}} _T:\M(\bm{r})\to\R^{n-3}$ associated to any triangulation $T$ of $n$-gons (see \cite{kapo}, Subsection\,4.1 for details). The aim of this paper is to generalize Theorem\,\ref{thm0} for more general $\bm{r}\in\Z_{>0} ^n$ and any triangulation $T$ of $n$-gons. However, Kamiyama's argument above can not be applied to the case of any triangulation $T$ of $n$-gons literally. The reason is as follows. The bending system\,(\ref{eq00}) coincides with the map $\pi^{\bm{r}} _T:\M(\bm{r})\to\R^{n-3}$ when $T$ is a special triangulation of $n$-gons given by the $(n-1)$-caterpillar in Example\,\ref{ex2.6}. 
Then the set $\{\text{the }n\text{-caterpillar}\}_{n\geq 4}$ has a canonical linear order by the number $n\geq 4$, which can be regarded as a key point for deriving Kamiyama's recurrence relation\,(\ref{eq4.20}) on $n\geq 4$ and $i\geq 0$. In contrast, the set of all triangulations does not have such a linear order. 

Our idea to generalize Theorem\,\ref{thm0} is to use the notion of operads after identifying a triangulation of $n$-gons with its dual graph: a trivalent rooted ribbon $(n-1)$-tree (see Section\,2 for relevant terminology). 
In this paper, we will consider the operad of trivalent rooted ribbon trees, where its grafting operation plays the role of the linear order in the caterpillar case. Then we can describe  ``recursive structures'' arising from the number of lattice points by morphisms of operads, which are main key players in this paper. Indeed, from our framework of operads Kamiyama's recurrence relation\,(\ref{eq4.20}) is replaced by the relations\,(\ref{eq4.21}) and (\ref{eq4.22}). 

Now we will state the main result. We denote by $\Tt=\{\Tt(n)\}_{n\geq 1}$ the trivalent rooted ribbon tree operad (Example\,$\ref{ex2.8}$) and by $\Tc=\{\Tc(n)\}_{n\geq 1}$ the corolla operad (Example\,$\ref{ex2.9}$). Let $\W(\Z_{\geq 0})$ be a certain operad given in Definition\,$\ref{def2.12}$, which consists of integer-valued functions on a product space of $\Z_{\geq 0}$. Then, the main result in this paper is the following.

\begin{theo}\label{thm1.1}
We can associate non-trivial morphisms of operads $\fk:\Tc\to\W(\Z_{\geq 0})$ and $\fr:\Tt\to\W(\Z_{\geq 0})$ to the K\"ahler and real polarizations on the polygon spaces respectively. 
	Furthermore, the morphism $\fr$ coincides with the pull-back of the morphism $\fk$ by a natural morphism $\p:\Tt\to\Tc$ given in Example\,\ref{ex2.14}. In other words, we have the following commutative diagram:
	\[
  \xymatrix@C=36pt{
                   \Tt \ar[r]^{\fr} \ar[d]_{\p}  &\W(\Z_{\geq 0}) \\
                   \Tc \ar[ru]_{\fk}   &\\
                   }
\]
\end{theo}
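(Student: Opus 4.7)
The plan is to define $\fk$ and $\fr$ by the natural quantization-theoretic counts attached to the polygon space, check that each assignment is compatible with operadic composition in $\W(\Z_{\geq 0})$, and then deduce the commutative triangle from the fact that $\Tt$ is generated by its arity-three part under grafting. Concretely, on generating objects I would set
\[
  \fk(C_n)(\bm{r}) \,=\, \dim H^0\bigl(\M(\bm{r}),\,\Oh_{\Li(\bm{r})}\bigr), \qquad
  \fr(T)(\bm{r}) \,=\, \#\,\mathrm{Im}(\pi^{\bm{r}}_T)\cap\Z^{n-3},
\]
for the $n$-corolla $C_n\in\Tc(n)$ and a trivalent rooted ribbon $n$-tree $T\in\Tt(n)$, with the convention that both right-hand sides are set to zero when $\bm{r}$ violates (\ref{eq0000}) or makes $\M(\bm{r})$ empty. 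By the definitions recalled in the introduction, these are exactly the assignments $C_n\mapsto\dim\mathscr{H}_{\mathrm{K}\ddot{\mathrm{a}}\mathrm{h}}$ and $T\mapsto\dim\mathscr{H}_\mathrm{re}$.

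The heart of the argument is to show that these assignments respect operadic composition. The composition law of $\W(\Z_{\geq 0})$ from Definition \ref{def2.12} is expected to encode a convolution over one new $\Z_{\geq 0}$-variable for each grafted edge. For $\fr$, the internal edge introduced by a grafting $T=T_0\circ_i T_1$ records the length of a diagonal of the polygon, and the corresponding $b_j$-coordinate of the bending system is exactly that length; fibring the moment polytope of $T$ over this coordinate and counting lattice points on each slice yields
\[
  \fr(T)(\bm{r}) \,=\, \sum_{s\in\Z_{\geq 0}}\, \fr(T_0)(\dots,s,\dots)\,\fr(T_1)(s,\dots),
\]
which matches the composition rule in $\W(\Z_{\geq 0})$. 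For $\fk$ I would prove the analogous factorization
\[
  \dim H^0\bigl(\M(\bm{r}),\Oh_{\Li(\bm{r})}\bigr) \,=\, \sum_{s\in\Z_{\geq 0}}\, \dim H^0\bigl(\M(\bm{r}',s),\Oh\bigr)\,\dim H^0\bigl(\M(s,\bm{r}''),\Oh\bigr)
\]
for any splitting $\bm{r}=(\bm{r}',\bm{r}'')$ of the edges, obtained by symplectically cutting $\M(\bm{r})$ along the slice of integer diagonal length $s$, identifying the two reduced pieces with smaller polygon spaces, and checking that holomorphic sections of the prequantum bundle decompose compatibly.

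Once both $\fk$ and $\fr$ are established as morphisms of operads, the commutative triangle reduces to a matching on generators. The operad $\Tt$ is generated under grafting by $\Tt(3)$, so any morphism of operads out of $\Tt$ is determined by its restriction to arity three; both $\fr$ and $\fk\circ\p$ send any $T\in\Tt(3)$ to the function $(r_0,r_1,r_2)\mapsto\dim H^0(\M(r_0,r_1,r_2),\Oh)$, which equals $1$ if the strict triangle inequalities hold and $0$ otherwise, since $\M(r_0,r_1,r_2)$ is either a single point or empty. Hence the two morphisms agree on the generators and therefore coincide throughout.

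I expect the main obstacle to be the factorization formula for $\fk$. Unlike the combinatorial fibration that handles $\fr$, the holomorphic version requires controlling the prequantum line bundle under symplectic cutting, together with a careful treatment of the degenerate slices where the diagonal length $s$ sits at the boundary of condition (\ref{eq0000}) and the cut polygon space acquires singularities. Verifying that the cut-and-glue decomposition of sections is compatible with both the $SO(3)$-reduction and the complex structure, and then summing over $s$ to recover $\dim H^0$ of the original polygon space, is where the bulk of the technical work should sit.
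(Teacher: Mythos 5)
Your proposal founders at its very first step: defining $\fk$ and $\fr$ directly as $\dim H^0(\M(\bm{r}),\mathcal{O}_{\mathcal{L}(\bm{r})})$ and $\#\,\mathrm{Im}(\pi^{\bm{r}}_T)\cap\Z^{n-3}$, with the convention that degenerate tuples (those violating (\ref{eq0000}) or making $\M(\bm{r})$ empty) are sent to $0$, does \emph{not} produce morphisms of operads. The operadic composition in $\W(\Z_{\geq 0})$ is a sum over all $s\in\Z_{\geq 0}$ appearing as the label of the newly created internal edge, and the boundary values of $s$ (where a triangle inequality becomes an equality) contribute nontrivially. Concretely, take $n=4$, $T=C_3 = C_2\circ_2 C_2$, and $\bm{r}=(2,1,1,1)$. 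The moment interval of the single bending Hamiltonian is $[1,2]$, so $\#\,\mathrm{Im}(\pi^{\bm{r}}_T)\cap\Z = 2$. But the proposed factorization $\sum_s \fr(C_2)(2;1,s)\cdot\fr(C_2)(s;1,1)$ has its only potentially nonzero terms at $s=1$ and $s=2$, and in both, one factor is a triangle with edges $(2,1,1)$, which violates (\ref{eq0000}); your convention kills both terms, giving $0\neq 2$. The same problem occurs for $\fk$. Moreover, the function you assign to arity $2$ (the indicator of the \emph{strict} triangle inequalities) is simply the wrong function — the Clebsch–Gordan multiplicity $[R(c_1)\otimes R(c_2):R(d)]$ equals $1$ exactly on the closed range $|c_1-c_2|\leq d\leq c_1+c_2$, and those boundary values cannot be discarded.

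The paper avoids this entirely by decoupling the operadic objects from the geometric quantities. It defines $\fk$ as the $SO(3)$-representation multiplicity $(d;\bm{c})\mapsto[R(c_1)\otimes\cdots\otimes R(c_n):R(d)]$ (Definition \ref{def3.5}) and $\fr$ as the count of admissible integral edge-labelings $\#\D(T,d;\bm{c})$ of the tree (Definition \ref{def4.10}); both are unconditionally defined on all of $\Z_{\geq 0}^{n+1}$, including degenerate and zero entries. The operadic compatibility of $\fk$ then follows from the complete reducibility of $SO(3)$-representations (Proposition \ref{prop3.4}), not from any symplectic cutting, and that of $\fr$ from the elementary bijection of Lemma \ref{lem4.5}. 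Only afterwards (Propositions \ref{prop3.5} and \ref{prop4.19}) does the paper show these combinatorial quantities recover $\dim H^0$ and the lattice-point count in the nondegenerate range, via Braverman's quantization-commutes-with-reduction theorem and Proposition \ref{prop4.10}. Finally, your reduction to generators is right in spirit but off by one: $\Tt$ is generated by the $2$-corolla $\Tt(2)$, not $\Tt(3)$, and the paper's Corollary \ref{cor2.17} makes exactly this arity-$2$ comparison, using Lemma \ref{prop3.7} (Clebsch–Gordan) against Lemma \ref{lem4.4}(2).
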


Theorem\,\ref{thm1.1} yields the following corollary.

\begin{cor}\label{cor1.2}
	Let $n\geq 4$, let $T$ be any trivalent rooted ribbon $(n-1)$-tree (or any triangulation of $n$-gons), and let $\bm{r}$ be any $n$-tuple of positive integers satisfying the condition\,$(\ref{eq0000})$. Then we have
	\begin{equation}
	  \dim H^0\bigl(\M(\bm{r}),\Oh_{\Li(\bm{r})}\bigr)\,=\, \#\ \mathrm{Im} \bigl(\pi^{\bm{r}} _T\bigr) \cap \Z^{n-3}.\label{eq000}
	\end{equation}
\end{cor}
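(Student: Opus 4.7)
The plan is to derive Corollary\,\ref{cor1.2} as a direct evaluation of the commutative diagram in Theorem\,\ref{thm1.1}, once both sides of the desired identity are recognized as specific values of the operad morphisms $\fk$ and $\fr$.

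First, I would fix the bookkeeping. A triangulation $T$ of an $n$-gon, together with a choice of distinguished root-edge, corresponds via planar duality to a trivalent rooted ribbon tree with $n-1$ leaves, so $T\in\Tt(n-1)$. The edge-length vector $\bm{r}=(r_0,\dots,r_{n-1})$ then labels the $n$ external edges (the $n-1$ leaves together with the root) of $T$. The morphism $\p:\Tt\to\Tc$ collapses $T$ to the unique corolla in $\Tc(n-1)$ with the same number of leaves, independently of the internal combinatorial type of $T$.

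Second, I would invoke the constructions of $\fk$ and $\fr$ carried out in Sections\,3 and 4 of the paper. The precise sense in which these morphisms are ``associated to'' the K\"ahler and real polarizations is captured by the identities
\[
    \fk(\Tc(n-1))(\bm{r}) \,=\, \dim H^0\bigl(\M(\bm{r}),\,\Oh_{\Li(\bm{r})}\bigr),
\]
\[
    \fr(T)(\bm{r}) \,=\, \#\ \mathrm{Im}\bigl(\pi^{\bm{r}}_T\bigr)\cap\Z^{n-3},
\]
valid for every $\bm{r}\in\Z^n_{>0}$ satisfying condition\,$(\ref{eq0000})$. These identifications are built into the very definitions of $\fk$ and $\fr$, and so require no further argument at this stage.

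Third, I would apply the commutativity $\fr=\fk\circ\p$ supplied by Theorem\,\ref{thm1.1}. For every $T\in\Tt(n-1)$ this yields
\[
    \fr(T)(\bm{r}) \,=\, \fk(\p(T))(\bm{r}) \,=\, \fk(\Tc(n-1))(\bm{r}),
\]
and combining this chain with the two displays above gives the equality\,$(\ref{eq000})$. The whole content of the corollary is thus the $T$-independence of the right-hand side of\,$(\ref{eq000})$, which is precisely what the diagram encodes. The main obstacle is therefore not in this short derivation but in Theorem\,\ref{thm1.1} itself; once that theorem is in hand, no further analytic or combinatorial work remains. \qed
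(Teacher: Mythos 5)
Your proof is correct and follows essentially the same route as the paper: recognize both sides of\,(\ref{eq000}) as the evaluations $\bigl({(\fk)}_{n-1}(n-1)\bigr)(r_0;r_1,\dots,r_{n-1})$ and $\bigl({(\fr)}_{n-1}[T]\bigr)(r_0;r_1,\dots,r_{n-1})$ (Propositions\,\ref{prop3.5} and \ref{prop4.19}), then apply the commutativity $\p^*\fk=\fr$ from Theorem\,\ref{thm1.1}. One small imprecision: those two evaluation identities are not ``built into the very definitions'' of $\fk$ and $\fr$ — the morphisms are defined via $SO(3)$-multiplicities and via admissible integral labelings $\#\D(T,\cdot)$ respectively, and translating these into $\dim H^0$ and $\#\,\mathrm{Im}(\pi^{\bm{r}}_T)\cap\Z^{n-3}$ is exactly the (nontrivial) content of Propositions\,\ref{prop3.5} and \ref{prop4.19} — but since you invoke those sections for this purpose, the derivation stands.
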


As we already mentioned, the condition\,$(\ref{eq0000})$ on the edge-lengths is necessary for the polygon space $\M(\bm{r})$ to be a smooth manifold. In this sense, Corollary\,\ref{cor1.2} completely generalizes Theorem\,\ref{thm0}. 

This paper is organized as follows. After recalling and introducing basic preliminaries including the definitions of the three operds $\Tc$, $\Tt$, and $\W(\Z_{\geq 0})$ in Section\,2, 
we construct the morphisms $\fk$ and $\fr$ in Sections\,3 and 4 respectively. In Section\,5, we complete the proofs of Theorem\,\ref{thm1.1} and Corollary\,\ref{cor1.2}. 

{\bf Acknowledgments.} I am very grateful to my supervisor, Prof.\ Hiroshi Ohta for his guidance and helpful discussions. In particular, his comments led me to consider any trivalent tree in generalizing Theorem\,\ref{thm0}, which is indispensable for our operadic formulation. I would also like to thank him for suggesting many corrections in the manuscript. I also wish to thank Prof.\ Yasuhiko Kamiyama, Prof.\ Hiroshi Konno, Prof.\ Yuichi Nohara, Prof.\ Tatsuru Takakura, and Prof.\ Takahiko Yoshida for helpful comments in my talk about this paper in research seminars.

\section{Preliminaries}
In Section\,2, we recall and fix terminology on trees and operads, and give examples for the later sections. We refer to \cite{Markl2,Markl} for the materials in this section. 
\subsection{Rooted ribbon trees}

A {\it tree} is a contractible CW-complex of dimension 1. All trees in this paper are assumed to be compact. A {\it vertex} (resp.\,\,an {\it edge}) of a tree is a $0$-cell (resp. an open $1$-cell). A {\it half-edge} is a connected subspace consisting of one edge and one vertex. A vertex $v$ and a half-edge $h$ (resp. an edge $e$) are called adjacent if $h$ contains $v$ (resp. $e\cup v$ is connected). For any half-edge $h$, we denote by $-h$ the unique half-edge such that $h\cup (-h)$ is the closure of some edge. For a tree $T$, we denote by $V(T)$ the set of vertices, by $E(T)$ the set of edges, and by $H(T)$ the set of half-edges. In addition, for any vertex $v$, we denote by $H_v(T)$ the set of half-edges adjacent to $v$ and by $val(v)$ the number $\#H_v(T)$, which is called the {\it valence of $v$}. From now on, we assume that all trees have no vertex of valence $2$. First, we define a {\it ribbon tree}.

\begin{defi}	
	A {\it ribbon tree} is a tree $T$ where every vertex $v\in V(T)$ is equipped with a cyclic order on $H_v(T)$, namely a cyclic permutation $\sigma_v\in\mathfrak{S}_{H_v(T)}$ of length $val(v)$.   
\end{defi}

A vertex $v$ is called {\it external} if $val(v)=1$ and {\it internal} otherwise. A half-edge is called {\it external} if the adjacent vertex is external, and an edge is {\it internal} if all the adjacent vertices are internal. We denote by $\He(T)$ the set of external half-edges and by $\Ei(T)$ the set of internal edges. 
Next, we introduce a {\it rooted ribbon tree}.
\begin{defi} 
	(1) A {\it rooted ribbon tree} is a ribbon tree $T$ with a distinguished external half-edge, called the {\it root} and denoted by $r_T$. The adjacent vertex to the root is called the {\it root vertex}, denote by $v_T$. The external half-edges except the root are called the {\it leaves}. We call a rooted ribbon tree with $n$ leaves simply a {\it rooted ribbon $n$-tree}. 

	\noindent
	(2) Let $T$ and $S$ be rooted ribbon trees. Then $T$ is {\it isomorphic to $S$} if there exists an isomorphism $f : T\to S$ of CW-complexes which preserves the roots and the cyclic orders at each vertex. 
\end{defi}

A rooted ribbon tree has a natural numbering on leaves due to its ribbon structure.

\begin{defi}\label{defi2.4}
(1) Let $T$ be a ribbon tree. We define the permutation $\iota$ on $H(T)$ as 
	  \[\iota(h)\,=\,\sigma_v (-h), \]
	where $v$ is the vertex adjacent to the half-edge $-h$. 
	The set $\He(T)$ of external half-edges has a canonical cyclic order $\tau$ given by  
	 \[\tau(h)\,=\,\iota^{N_h}(h),\] 
	where $N_h$ is the minimum number $N\in\Z_{>0}$ such that the half-edge $\iota^N(h)$ becomes external. 
	
	\noindent
	(2) Let $T$ be a rooted ribbon $n$-tree. For $0\leq i\leq n$, we set 
	\[
	  (r_T)_i\,=\,\tau^i (r_T)
	\]
	and call it the {\it $i$-th external half-edge}, or the {\it $i$-th leaf} \,if $i\neq 0$ (the $0$-th external half-edge is nothing but the root). In addition, we denote by $(v_T)_i$ the adjacent vertex to $(r_T)_i$ and call it the {\it $i$-th external vertex}, or the {\it $i$-th leaf vertex} \,if $i\neq 0$. 
\end{defi}

In this paper, we draw a rooted ribbon tree in the manner that a cyclic order at each vertex becomes compatible to the counterclockwise orientation. Since the numbering on external half-edges is also counterclockwise, a rooted ribbon tree can be described as in e.g. Figure\,\ref{fig1}. 
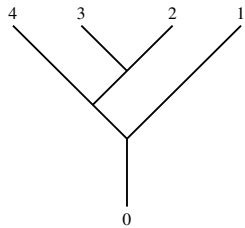
\begin{figure}[!h]
\centering
\begin{tikzpicture}[scale=0.6, transform shape, line width=0.7pt]
 \draw (5,1.5)--(5,3)--(7.5,5.5);
 \draw (2.5,5.5)--(5,3);
 \draw (6,5.5)--($(2.5,5.5)!(6,5.5)!(5,3)$);
 \draw (4,5.5)--($(6,5.5)!(4,5.5)!(4.5,4)$);
 \draw (5,1.5)node[below]{$0$};
 \draw (2.5,5.5)node[above]{$4$};
 \draw (4,5.5)node[above]{$3$};
 \draw (6,5.5)node[above]{$2$};
 \draw (7.5,5.5)node[above]{$1$};
\end{tikzpicture}
\caption{a rooted ribbon $4$-tree.}\label{fig1}
\end{figure}

There is an operation which produces a new tree from two rooted ribbon trees, called {\it grafting}. 

\begin{figure}[!h]
\centering
\begin{tikzpicture}[scale=0.6, transform shape, line width=0.7pt]
 \begin{scope}[xshift=-0.3cm, yshift=-0.5cm]
 \draw (5,2)--(5,3)--(7,5);
 \draw (3,5)--(5,3);
 \draw (5,3)--(6,5);
 \draw (5,3)--(4,5);
 \draw (5,3)--(5,5);
 \draw (5,2)node[below]{$0$};
 \draw (3,5)node[above]{$5$};
 \draw (4,5)node[above]{$4$};
  \draw (5,5)node[above]{$3$};
 \draw (6,5)node[above]{$2$};
 \draw (7,5)node[above]{$1$};
 \end{scope}
 
 \draw(8,3) circle (0.12)node[below right]{\ 3};
 
 \begin{scope}[xshift=-0.5cm]
 \draw (11,2)--(11,3)--(12,4);
 \draw (10,4)--(11,3);
 \draw (11,3)--(11,4);
 \draw (11,2)node[below]{$0$};
 \draw (10,4)node[above]{$3$};
 \draw (11,4)node[above]{$2$};
 \draw (12,4)node[above]{$1$};
 \end{scope}
 
 \draw (13.5,3)node{$\xlongequal{\ \ \ }$};
 
 \begin{scope}[xshift=0.5cm, yshift=-1.6cm]
 \draw (17,2)--(17,3)--(19,5);
 \draw (15,5)--(17,3);
 \draw (17,3)--(18,5);
 \draw (17,3)--(16,5);
 \draw (17,3)--(17,5)--(17,6)--(17,7);
 \draw (17,6)--(16,7);
 \draw (17,6)--(18,7);
 \draw (17,2)node[below]{$0$};
 \draw (15,5)node[above]{$7$};
 \draw (16,5)node[above]{$6$};
  \draw (18,7)node[above]{$3$};
  \draw (17,7)node[above]{$4$};
 \draw (16,7)node[above]{$5$};
 \draw (18,5)node[above]{$2$};
 \draw (19,5)node[above]{$1$};
 \end{scope}
\end{tikzpicture}
\caption{grafting.}\label{fig2}
\end{figure}
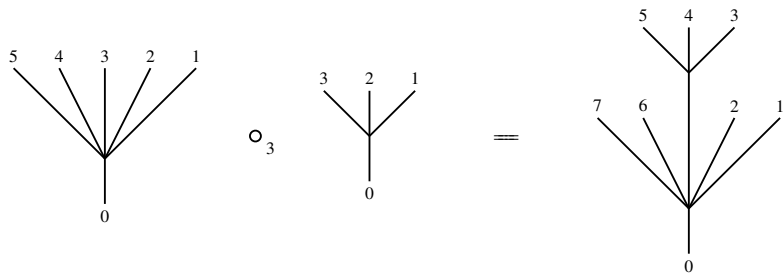

\begin{defi}\label{def1.5}
	Let $T$ and $S$ be rooted ribbon trees, let $n$ (resp. $m$) be the number of leaves of $T$ (resp. $S$), and let $1\leq i\leq n$. We consider the gluing space 
	\[T\circ_i S\,=\,T\sqcup S\bigl/(v_T)_i\sim v_S, \]
	which inherits ribbon tree structure from $T$ and $S$ in the obvious way. 
	We will regard $T\circ_i S$ as a rooted ribbon tree by adopting $r_T\in\He(T\circ_i S)$ as the root and call it the {\it grafted tree of $T$ and $S$ along the $i$-th leaf} (see Figure\,\ref{fig2}). We often identify $T$ and $S$ with the subcomplexes  
	\[
	  T\sqcup(-r_S)\text{ \ and \ } S\sqcup(-(r_T)_i)
	\]
	of $T\circ_i S$ respectively.
	\end{defi}

Here are examples of a rooted ribbon tree.
\begin{exam}\label{ex1.2}
	A rooted ribbon $n$-tree which has no internal edge is called the {\it $n$-corolla} (see Figure\,\ref{fig3}). In particular, the $1$-corolla is called the {\it exceptional tree}, which is the rooted ribbon tree having just one edge. 
	Note that the $n$-corolla is unique up to isomorphisms. 
\end{exam}

\begin{figure}[!h]
\begin{minipage}[b]{0.49\columnwidth}
\centering
\begin{tikzpicture}[scale=0.6, transform shape, line width=0.7pt]
\draw (5,1.5)--(5,3)--(7.5,5.5);
 \draw (2.5,5.5)--(5,3);
 \draw (5,3)--(6,5.5);
 \draw (5,3)--(4,5.5);
 \draw[loosely dotted] (4.4,5.5)--(5.6,5.5);
 \draw (5,1.5)node[below]{$0$};
 \draw (2.5,5.5)node[above]{$n$};
 \draw (4,5.5)node[above]{$n-1$};
 \draw (6,5.5)node[above]{$2$};
 \draw (7.5,5.5)node[above]{$1$};
\end{tikzpicture}
\caption{the $n$-corolla.}\label{fig3}
\end{minipage}
\begin{minipage}[b]{0.49\columnwidth}
\centering
\begin{tikzpicture}[scale=0.6, transform shape, line width=0.7pt]
 \draw (5,1.5)--(5,3)--(7.5,5.5);
 \draw (2.5,5.5)--(5,3);
 \draw (6.2,5.5)--($(2.5,5.5)!(6.2,5.5)!(5,3)$);
 \draw (3.8,5.5)--($(2.5,5.5)!(3.8,5.5)!(5,3)$);
 \draw[loosely dotted] (4.4,5.5)--(5.7,5.5);
 \draw (5,1.5)node[below]{$0$};
 \draw (2.5,5.5)node[above]{$n$};
 \draw (3.8,5.5)node[above]{$n-1$};
 \draw (6.2,5.5)node[above]{$2$};
 \draw (7.5,5.5)node[above]{$1$};
\end{tikzpicture}
\caption{the $n$-caterpillar.}\label{fig4}
\end{minipage}
\end{figure}

\begin{exam}\label{ex2.6}
	Suppose $n\geq 2$. 
	We refer the rooted ribbon $n$-tree in Figure\,\ref{fig4} as the {\it $n$-caterpillar}. Precisely, 
	we define the {\it $n$-caterpillar} recursively: 
	
	\noindent
	(i) when $n=2$, the {\it $2$-caterpillar} is the $2$-corolla,  
	
	\noindent
	(ii) when $n\geq 3$, the {\it $n$-caterpillar} is the grafted tree of the {\it $2$-caterpillar} and the {\it $(n-1)$-caterpillar} along the second leaf.
	
	\noindent
	Note that the $n$-caterpillar is unique up to isomorphisms. We denote by $C_n$ the isomorphism class of $n$-caterpillars.
\end{exam}

When it has an internal edge, a rooted ribbon tree can be decomposed into smaller trees as in Figure\,\ref{fig5}. At the end of this subsection, we state this fact as the next proposition in terms of grafting. 

\begin{figure}[!h]
\begin{minipage}[b]{0.49\columnwidth}
\centering
\begin{tikzpicture}[scale=0.6, transform shape, line width=0.7pt]
 \draw (5,1.5)--(5,3)--(7.5,5.5);
 \draw (2.5,5.5)--(5,3);
 \draw (6,5.5)--($(2.5,5.5)!(6,5.5)!(5,3)$);
 \draw (4,5.5)--($(6,5.5)!(4,5.5)!(4.5,4)$);
 \draw (5,1.5)node[below]{$0$};
 \draw (2.5,5.5)node[above]{$4$};
 \draw (4,5.5)node[above]{$3$};
 \draw (6,5.5)node[above]{$2$};
 \draw (7.5,5.5)node[above]{$1$};
\end{tikzpicture}
\end{minipage}
\begin{minipage}[b]{0.49\columnwidth}
\centering
\begin{tikzpicture}[scale=0.6, transform shape, line width=0.7pt]
\draw (5,1.5)--(5,3)--(7.5,5.5);
 \draw (2.5,5.5)--(5,3);
 \draw (6.5,6)--(5.2,4.7);
 \draw (4.25,3.75)--(4.6,4.1);
 \draw (4.5,6)--($(6.8,6.3)!(4.5,6)!(5.4,4.9)$);
 \draw (5,1.5)node[below]{$0$};
 \draw (2.5,5.5)node[above]{$4$};
 \draw (4.5,6)node[above]{$3$};
 \draw (6.5,6)node[above]{$2$};
 \draw (7.5,5.5)node[above]{$1$};
\end{tikzpicture}
\end{minipage}
\caption{decomposing a rooted ribbon tree.}\label{fig5}
\end{figure}
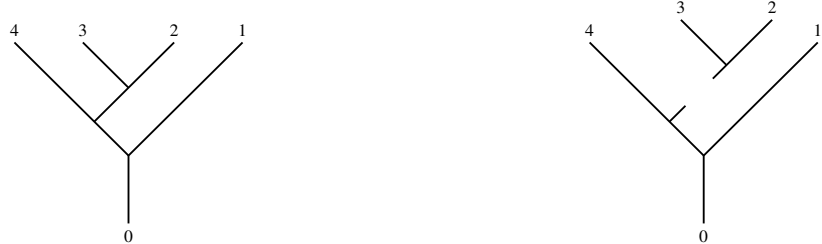

\begin{prop}\label{prop2.6}
	Let $T$ be a rooted ribbon $n$-tree and suppose that $T$ has an internal edge $e$. Then, there exist rooted ribbon trees $S_e$ and $S_e ^{\prime}$ with more than two leaves such that $T$ is isomorphic to the grafted tree $S_e$ and $S_e ^{\prime}$ along some leaf of $S_e$.
\end{prop}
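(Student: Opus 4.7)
The plan is to reverse the grafting operation by cutting $T$ along the internal edge $e$. Let the two half-edges of $e$ be $h$ and $-h$, and let $v_1$ (resp.\ $v_2$) be the vertex adjacent to $-h$ (resp.\ $h$); since $e$ is internal, both $v_1$ and $v_2$ are internal, hence of valence at least three. Removing the interior of $e$ disconnects $T$ into two components $T_1 \ni r_T$ and $T_2$, with $v_1 \in T_1$ and $v_2 \in T_2$. I will build $S_e$ from $T_1$ by attaching a new external vertex $w_1$ adjacent to a new half-edge $h_1$, pairing $-h$ and $h_1$ as the two halves of a new edge of $S_e$; the cyclic order at $v_1$ is inherited from $T$ (as $H_{v_1}(S_e) = H_{v_1}(T)$ as sets), the cyclic order at $w_1$ is trivial, and $r_T$ remains the root. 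Symmetrically, $S_e'$ is built from $T_2$ via a new vertex $w_2$ and a new half-edge $h_2$ paired with $h$, with $h_2$ declared as the root.

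I then check that neither $S_e$ nor $S_e'$ is the exceptional tree. In $S_e$, the vertex $v_1$ has valence at least three; one of its half-edges is $-h$, another is the unique half-edge on the path from $v_1$ to $r_T$, so at least one remaining half-edge points into a subtree of $T_1$ containing at least one leaf of $T$. Together with the new leaf $h_1$, this gives at least two leaves in $S_e$. Similarly, at $v_2$ all non-$h$ half-edges point into $T_2$ and each contributes a leaf of $T$, so $S_e'$ has at least two leaves. Letting $i$ be the index with $h_1 = (r_{S_e})_i$ under the canonical ordering $\tau$ of Definition \ref{defi2.4}, the grafting $S_e \circ_i S_e'$ identifies $w_1 = (v_{S_e})_i$ with $w_2 = v_{S_e'}$, producing a bivalent vertex with half-edges $h_1, h_2$; smoothing this vertex in the obvious way (Definition \ref{def1.5}) merges $\{-h, h_1\}$ and $\{h_2, h\}$ into the single edge $\{-h, h\} = e$, and all remaining combinatorial data coincides with that of $T$, yielding $T \cong S_e \circ_i S_e'$.

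The main obstacle I expect lies in the careful bookkeeping needed to show that the canonical cyclic ordering $\tau$ on $\He(S_e)$ assigns $h_1$ to precisely the slot $i$ at which $S_e'$ must reattach. Equivalently, one must confirm that the smoothing of the bivalent vertex appearing in $S_e \circ_i S_e'$ genuinely inverts the cut-and-cap construction above, which is the intended meaning of the phrase ``inherits ribbon tree structure from $T$ and $S$ in the obvious way'' in Definition \ref{def1.5}; making this rigorous requires some attention to the involution on half-edges and the induced cyclic orders at $v_1$ and $v_2$.
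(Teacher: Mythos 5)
Your proof is correct and takes essentially the same approach as the paper: cut the internal edge $e$, cap the two resulting ends to form $S_e$ and $S_e'$ (the paper does this by fixing a point $a\in e$ and taking closures of the two components of $T\setminus\{a\}$, so $a$ plays the role of your new external vertices), and observe that grafting along the appropriate leaf reverses the cut. The paper's proof is much terser and leaves the leaf-count verification and the smoothing of the bivalent vertex as ``the obvious way''; those are precisely the details you make explicit.
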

\begin{proof}
	Fix a point $a\in e$. Then we denote by $S_e$ (resp. $S_e ^{\prime}$) the closure of the connected component of $T\setminus\{a\}$ which contains the root $r_T$ (resp. the closure of the other connected component). The subspaces $S_e$ and $S_e ^{\prime}$ become the desired rooted ribbon trees in the obvious way. 
	\qed
\end{proof}

\subsection{Operads}
Here is the definition of an operad we will use in this paper.
\begin{defi}\label{def2.6}
{\it A (non-symmetric) operad (in the category of sets)} is a sequence $\OO=\{\OO(n)\}_{n\geq 1}$ of sets, together with maps called the {\it operadic compositions} 
\[
  \circ_i : \OO(n)\times\OO(m)\longrightarrow\OO(n+m-1)
\]
for $1\leq i\leq n$ and $m\geq 1$. These data fulfill the following axioms.

\noindent
{\it Associativity.} \ For each $1\leq j\leq n$, $m, l\geq 1$, $X\in\OO(n)$, $Y\in\OO(m)$ and $Z\in\OO(l)$, 
\[
  (X\circ_j Y)\circ_i Z\,=\, \left\{
\begin{array}{ll}
(X\circ_i Z) \circ_{j+l-1} Y & \text{if } 1\leq i<j,\\
X\circ_j(Y \circ_{i-j+1} Z) & \text{if } j\leq i<m+j,\\
(X \circ_{i-m+1} Z)\circ_j Y & \text{if } j+m\leq i\leq n+m-1.
\end{array}
\right. 
\]

\noindent
{\it Unitality.} \ There exists an element $\unit \in \OO(1)$ called the {\it unit} such that
\[
  X\circ_i \unit =X \text{\ \ and\ \ } \unit\circ_1 Y = Y
\]
for each $1\leq i\leq n$, $m\geq 1$, $X\in\OO(n)$ and $Y\in\OO(m)$. 
\end{defi}

Here are the examples including operds $\Tc$, $\Tt$, and $\W(\Z_{\geq 0})$ in Theorem\,\ref{thm1.1}.

\begin{exam}\label{ex2.9}
	Here, we identify the isomorphism class of the $n$-corollas with the set $\{n\}$. 
	The {\it corolla operad} is the sequence $\Tc=\{\Tc(n)\}_{n\geq 1}$ given by 
	\[\Tc(n)\,=\, \{n\}\] with the obvious maps ${\circ}_i : \Tc(n)\times \Tc(m)\to \Tc(n+m-1)$. (The composition ${\circ}_i$ means contracting one internal edge after grafting two corollas along the $i$-th leaf.) 
\end{exam}

\begin{exam}\label{ex2.3}
	The {\it rooted ribbon tree operad} is the sequence $\T=\{\T(n)\}_{n\geq 1}$ given by 
	\[\T(n)\,=\, \{\text{\,isomorphism classes of rooted ribbon }n\text{-trees}\,\}\] with the maps $\circ_i : \T(n)\times \T(m)\to \T(n+m-1)$ defined by 
	\[[T]\circ_i[S]\,=\,[T\circ_i S],
\]where $T\circ_i S$ is the grafted tree in Definition\,$\ref{def1.5}$. Note that the unit $\unit$ is given by the exceptional tree in Example\,$\ref{ex1.2}$. 
\end{exam}

\begin{exam}\label{ex2.8}
A {\it trivalent tree} is a tree where the valence of any vertex is equal to $1$ or $3$. The {\it trivalent rooted ribbon tree operad} is the sequence $\Tt=\{\Tt(n)\}_{n\geq 1}$ given by 
	\[\Tt(n)\,=\, \bigl\{\,[T]\in\T(n)\,\bigl|\,T\text{ is trivalent.}\,\bigr\}\] with the maps $\circ_i : \Tt(n)\times \Tt(m)\to \Tt(n+m-1)$ defined as in Example $\ref{ex2.3}$. 
\end{exam}

\begin{defi}\label{def1.10}
	Let $\C$ be a set. Then we define a sequence $\W(\C)=\{\W(\C)(n)\}_{n\geq 1}$ of sets by
	\[\W(\C)(n)\,=\,\bigl\{\,f : \C\times \C^n\to \Z\,\bigl|\,N(f,\bm{c})<\infty \text{ for any }\bm{c}\in\C^n\,\bigr\} ,\]
	where $N(f,\bm{c})$ is the number of elements $d\in\C$ satisfying $f(d;\bm{c})\neq 0$. 
	In addition, we define the maps $\circ_i : \W(\C)(n)\times \W(\C)(m)\longrightarrow \W(\C)(n+m-1)$ by 
	\begin{equation}
	  (f\circ_i g) (d;\bm{c})\,=\,\sum_{k\in\C}\, f(d;\bm{c}^{i,m;k})\cdot g(k;\bm{c}_{i,m})\label{eq1.10}
	\end{equation}
	for any $d\in\C$ and $\bm{c}=(c_1,\dots, c_{n+m-1})\in\C^{n+m-1}$, 
	where we use the following notations: 
	\begin{equation}
	  \bm{c}^{i, m; k}\,=\, \left\{
\begin{array}{ll}
(k,c_{1+m},\dots, c_{n+m-1}) & \text{if } i=1,\vspace{1mm}\\
(c_1,\dots, c_{i-1},k,c_{i+m},\dots, c_{n+m-1}) & \text{if } 2\leq i\leq n-1,\vspace{1mm}\\
(c_1,\dots, c_{n-1},k) & \text{if } i=n,
\end{array}
\right.\label{eq1.101}
	\end{equation}
and	
\begin{equation}
\bm{c}_{i,m}= (c_i,\dots, c_{i+m-1}).\label{eq1.102}
\end{equation}
Note that the right hand side of the equation ($\ref{eq1.10}$) is a finite sum since $N(g,\bm{c}_{i,m})<\infty$. 
\end{defi}

\begin{prop}\label{prop2.9}
	The sequence $\W(\C) =\{\W(\C)(n)\}_{n\geq 1}$ with the maps $\circ_i$ defined in Definition\,$\ref{def1.10}$ has the structure of an operad, whose unit $\unit$ is given by 
	\[
	  \unit(d;c)= \left\{
\begin{array}{ll}
1 & \text{if } d=c\\
0 & \text{otherwise}
\end{array}
\right. \hspace{3mm}\text{for any } (d;c)\in \C\times \C.
	\]
\end{prop}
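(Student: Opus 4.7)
The plan is to verify the three operad axioms --- well-definedness of $\circ_i$, unitality, and associativity --- by direct computation from Definition\,\ref{def1.10}. The non-trivial content is purely combinatorial bookkeeping on the index strings $\bm{c}^{i,m;k}$ and $\bm{c}_{i,m}$. First, for well-definedness of $f\circ_i g$ when $f\in\W(\C)(n)$, $g\in\W(\C)(m)$, and $\bm{c}\in\C^{n+m-1}$, I would note that the set $K=\{k\in\C\mid g(k;\bm{c}_{i,m})\neq 0\}$ is finite by hypothesis, so the sum\,(\ref{eq1.10}) is a finite sum, and
\[
  N(f\circ_i g,\bm{c})\,\leq\,\sum_{k\in K}N(f,\bm{c}^{i,m;k})
\]
is finite. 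In particular $\unit\in\W(\C)(1)$ since $N(\unit,c)=1$ for every $c\in\C$.

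For unitality, taking $g=\unit$ and $m=1$ in\,(\ref{eq1.10}) leaves only the $k=c_i$ term, so $(X\circ_i\unit)(d;\bm{c})=X(d;\bm{c}^{i,1;c_i})$, and a case-by-case check using\,(\ref{eq1.101}) in each of the three regimes $i=1$, $2\leq i\leq n-1$, $i=n$ gives $\bm{c}^{i,1;c_i}=\bm{c}$, hence $X\circ_i\unit=X$. Similarly $\bm{c}^{1,m;k}=(k)$ and $\bm{c}_{1,m}=\bm{c}$ yield $(\unit\circ_1 Y)(d;\bm{c})=\sum_{k}\unit(d;k)Y(k;\bm{c})=Y(d;\bm{c})$.

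Associativity constitutes the bulk of the proof. For each of the three cases in Definition\,\ref{def2.6}, I would expand both sides by applying\,(\ref{eq1.10}) twice to obtain a double sum over auxiliary variables $k,k'\in\C$, and then match the summands term by term. For example, in the nested case $j\leq i<m+j$ the identity $(X\circ_j Y)\circ_i Z=X\circ_j(Y\circ_{i-j+1}Z)$ reduces to the three positional identities
\[
  (\bm{c}^{i,l;k})^{j,m;k'}=\bm{c}^{j,m+l-1;k'},\quad (\bm{c}^{i,l;k})_{j,m}=(\bm{c}_{j,m+l-1})^{i-j+1,l;k},\quad (\bm{c}_{j,m+l-1})_{i-j+1,l}=\bm{c}_{i,l},
\]
each established by an elementary entry-by-entry check. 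The two non-nested cases $1\leq i<j$ and $j+m\leq i\leq n+m-1$ reduce to analogous string identities and are treated in the same way, with the resulting double sums interchanged via Fubini over the finite supports.

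The main obstacle I anticipate is purely notational rather than mathematical: because $\bm{c}^{i,m;k}$ and $\bm{c}_{i,m}$ are defined piecewise in\,(\ref{eq1.101})--(\ref{eq1.102}), verifying each positional identity requires distinguishing whether the indices involved fall before, inside, or after the ``deleted window'' $[j,j+m-1]$, and taking care at the boundary cases $i=1$ and $i=n$ where the piecewise definition collapses. Once these positional identities are confirmed, associativity follows mechanically by exchanging the order of summation over $k$ and $k'$, and the proposition is proved.
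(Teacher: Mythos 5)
Your proposal is correct and follows essentially the same route as the paper: expand both sides of each associativity case via the defining formula\,(\ref{eq1.10}), reduce to the three positional identities on the index strings $\bm{c}^{i,m;k}$ and $\bm{c}_{i,m}$, then interchange the finite double sum; unitality is the same direct evaluation at $k=c_i$ (resp.\ $k=d$). A small bonus over the paper's write-up: you explicitly check that $f\circ_i g$ lands in $\W(\C)(n+m-1)$ via the bound $N(f\circ_i g,\bm{c})\leq\sum_{k\in K}N(f,\bm{c}^{i,m;k})$, whereas the paper only remarks after Definition\,\ref{def1.10} that the defining sum is finite and leaves the finiteness of $N(f\circ_i g,\bm{c})$ implicit.
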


\begin{proof}
	It suffices to show that the associativity and unitality axioms in Definition\,$\ref{def2.6}$ hold. 
	
	\noindent
	{\it Associativity.} \ Let $1\leq j\leq n$, $m, l\geq 1$, $f\in \W(\C)(n)$, $g\in \W(\C)(m) $, and $h\in \W(\C)(l)$. Then, for any $(d;\bm{c})\in\C\times \C^{n+m+l-2}$ we have
	\begin{align}
	 \bigl((f\circ_j g)\circ_i h\bigr)(d;\bm{c})&\,=\,\sum_{k\in\C} \,(f\circ_j g)(d;\bm{c}^{i,l;k})\cdot h(k;\bm{c}_{i,l}) \nonumber\\
	 &\,=\, \sum_{k, k^{\prime}\in\C} f\bigl(d; {(\bm{c}^{i, l; k})}^{j, m; k^{\prime}}\bigr)\cdot g\bigl(k^{\prime}; {(\bm{c}^{i,  l; k})}_{j,m}\bigr)\cdot h(k;\bm{c}_{i, l}).\label{eq2.12}
	\end{align}
	Note that  
	\[
	  (\bm{c}^{i, l; k})^{j, m; k^{\prime}} \,=\, \left\{
\begin{array}{ll}
(\bm{c}^{j+l-1, m; k^{\prime}})^{i, l; k} & \text{if } 1\leq i<j,\\
\bm{c}^{j,m+l-1;k^{\prime}} & \text{if } j\leq i<m+j,\\
(\bm{c}^{j, m; k^{\prime}})^{i-m+1, l; k} & \text{if } j+m\leq i\leq n+m-1,
\end{array}
\right. 
	\]
	\[
	  (\bm{c}^{i, l; k})_{j,m} \,=\, \left\{
\begin{array}{ll}
\bm{c}_{j+l-1,m} & \text{if } 1\leq i<j,\\
(\bm{c}_{j,m+l-1})^{i-j+1, l; k} & \text{if } j\leq i<m+j,\\
\bm{c}_{j,m} & \text{if } j+m\leq i\leq n+m-1,
\end{array}
\right. 
	\]
	and
	\[
	  \bm{c}_{i, l} \,=\, \left\{
\begin{array}{ll}
(\bm{c}^{j+l-1, m; k^{\prime}})_{i, l} & \text{if } 1\leq i<j,\\
(\bm{c}_{j, m+l-1})_{i-j+1, l} & \text{if } j\leq i<m+j,\\
(\bm{c}^{j, m; k^{\prime}})_{i-m+1,l} & \text{if } j+m\leq i\leq n+m-1,
\end{array}
\right. 
	\]
	for any $k, k^{\prime}\in\C$. 
	
	\noindent
	(I)\ The case $1\leq i<j$: Then the right hand side of the equation (\ref{eq2.12}) is rewritten as
	\begin{align*}
		&\sum_{k,k^{\prime}\in\C} f\bigl(d; (\bm{c}^{j+l-1,m;k^{\prime}})^{i,l;k}\bigr)\cdot g\bigl(k^{\prime}; \bm{c}_{j+l-1,m}\bigr)\cdot h\bigl(k; (\bm{c}^{j+l-1,m;k^{\prime}})_{i,l}\bigr)\\&\hspace{30mm}\,=\, \sum_{k^{\prime}\in\C} \,(f\circ_i h)(d; \bm{c}^{j+l-1,m;k^{\prime}})\cdot g\bigl(k^{\prime}; \bm{c}_{j+l-1,m}\bigr)\\
		&\hspace{30mm}\,=\, \bigl((f\circ_i h)\circ_{j+l-1} g\bigr)(d;\bm{c}).
	\end{align*}
	This implies $(f\circ_j g)\circ_i h=(f\circ_i h)\circ_{j+l-1} g$.
	
	\noindent
	(II)\ The case $j\leq i<m+j$: Then the right hand side of the equation (\ref{eq2.12}) is rewritten as
	\begin{align*}
		&\sum_{k,k^{\prime}\in\C} f\bigl(d; \bm{c}^{j,m+l-1;k^{\prime}}\bigr)\cdot g\bigl(k^{\prime}; (\bm{c}_{j,m+l-1})^{i-j+1,l;k}\bigr)\cdot h\bigl(k; (\bm{c}_{j,m+l-1})_{i-j+1,l}\bigr)\\&\hspace{38mm}\,=\, \sum_{k^{\prime}\in\C} f \bigl(d; \bm{c}^{j,m+l-1;k^{\prime}}\bigr)\cdot (g\circ_{i-j+1} h)\bigl(k^{\prime}; \bm{c}_{j,m+l-1}\bigr)\\
		&\hspace{38mm}\,=\, \bigl(f\circ_j (g\circ_{i-j+1} h)\bigr)(d;\bm{c}).
	\end{align*}
	This implies $(f\circ_j g)\circ_i h =f\circ_j (g\circ_{i-j+1} h)$.
	
	\noindent
	(III)\ The case $j+m\leq i\leq n+m-1$: Then the right hand side of the equation (\ref{eq2.12}) is rewritten as
	\begin{align*}
		&\sum_{k,k^{\prime}\in\C} f\bigl(d; (\bm{c}^{j,m;k^{\prime}})^{i-m+1,l;k}\bigr)\cdot g\bigl(k^{\prime}; \bm{c}_{j,m}\bigr)\cdot h\bigl(k; (\bm{c}^{j,m;k^{\prime}})_{i-m+1,l}\bigr)\\&\hspace{28mm}\,=\, \sum_{k^{\prime}\in\C} \,(f\circ_{i-m+1} h)(d; \bm{c}^{j,m;k^{\prime}})\cdot g\bigl(k^{\prime}; \bm{c}_{j,m}\bigr)\\
		&\hspace{28mm}\,=\, \bigl((f\circ_{i-m+1} h)\circ_j g\bigr)(d;\bm{c}).
	\end{align*}
	This implies $(f\circ_j g)\circ_i h=(f\circ_{i-m+1} h)\circ_j g$. Therefore, the associativity axiom holds.
	
	\noindent
	{\it Unitality.} \ Let $1\leq i\leq n$, $m\geq 1$, $f\in \W(\C)(n)$, and $g\in \W(\C)(m) $. Then, for any $(d;\bm{c})\in\C\times\C^n$ and $(b;\bm{a})\in\C\times\C^m$ we have
	\[
	  (f\circ_i\unit)(d;\bm{c})\,=\, \sum_{k\in\C} \,f(d;\bm{c}^{i,1;k})\cdot \unit(k;c_i)
	  \,=\, f(d;\bm{c}),
	\] 
	\[
		(\unit\circ_1 g)(b;\bm{a})\,=\, \sum_{k\in\C} \,\unit(b;k)\cdot g(k;\bm{a})
		\,=\, g(b;\bm{a}).
	\]
	This implies $f\circ_i\unit =f$ and $\unit\circ_1 g =g$. Therefore, the unitality axiom holds. 
	\qed
\end{proof}

We will introduce a morphism of operads below.

\begin{defi}\label{def2.12}
Let $\OO=\{\OO(n)\}_{n\geq 1}$ and $\PP=\{\PP(n)\}_{n\geq 1}$ be (non-symmetric) operads (in the category of sets). {\it A morphism from $\OO$ to $\PP$} is a sequence $\f=\{\f_n: \OO(n)\to \PP(n)\}_{n\geq 1}$ of maps which commute with the operadic compositions and preserve the units, that is, satisfy the following conditions:

\noindent
(i) \ $\f_{n+m-1} (X\circ_i Y)\,=\,\f_n (X)\,\circ_i\,\f_m (Y)$ \ for each $1\leq i\leq n, m\geq 1, X\in\OO(n)$, and $Y\in\OO(m)$,

\noindent
(ii) \ $\f_1 (\unit)\,=\,\unit$.

\noindent
We write $\f: \OO\to\PP$ to indicate that $\f$ is a morphism from $\OO$ to $\PP$. 
\end{defi}

Here are the examples.

\begin{exam}\label{ex2.13}
	For sets $\C$ and $\C^{\prime}$, we consider operads $\W(\C)$ and $\W(\C^{\prime})$ as in Definition\,$\ref{def1.10}$. Let $\Phi:\C\to\C^{\prime}$ be a map. We define the sequence ${\Phi}^* =\{{({\Phi}^*)}_n:\W(\C^{\prime})(n)\to \W(\C)(n)\}_{n\geq 1}$ of maps by 
	\[
	  {({\Phi}^*)}_n(f)\,=\,\Bigl(\,(d;c_1,\dots, c_n)\longmapsto f\bigl(\,\Phi(d);\,\Phi(c_1), \dots, \Phi(c_n)\,\bigr)\,\Bigr).
	\]
	Then ${\Phi}^*$ is a morphism from $\W(\C^{\prime})$ to $\W(\C)$. 
\end{exam}

\begin{exam}\label{ex2.14}
	We define the sequence $\p=\{\p_n: \Tt(n)\to\Tc(n)\}_{n\geq 1}$ of maps by 
	\[
	  \p_n [T]\,=\,n. 
	\]
	Then $\p$ is a morphism from $\Tt$ to $\Tc$. (The map $\p_n$ means contracting all internal edges of trivalent rooted ribbon $n$-trees.) 
\end{exam}

The next lemma gives a criterion for uniqueness of morphisms of operads. 
\begin{lemm}\label{lemm2.14}
	Let $\OO, \PP$ be operads and let $\f, \g :\OO\to\PP$ be two morphisms of operads. We have $\f=\g$ if the following conditions hold.
	
	\noindent
	$(\mathrm{i})$ $\f_1\,=\,\g_1$,
	
	\noindent
	$(\mathrm{ii})$ $\f_2\,=\,\g_2$,
	
	\noindent
	$(\mathrm{iii})$ For $n>2$ and $X\in \OO(n)$, there exist $2\leq m<n$, $2\leq l< n$, $Y\in\OO(m)$, and $Z\in\OO(l)$ such that 
	\[
	  n=m+l-1\text{ \ and \ } X=Y\circ_i Z \text{\,\ for\ some\ } 1\leq i\leq m. 
	\]
\end{lemm}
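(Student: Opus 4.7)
The plan is to proceed by strong induction on the arity $n\geq 1$, showing that $\f_n=\g_n$. Conditions (i) and (ii) handle the base cases $n=1$ and $n=2$ directly, so the entire content of the argument lies in the inductive step.

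Fix $n>2$ and assume, as the inductive hypothesis, that $\f_k=\g_k$ for every $1\leq k<n$. Let $X\in\OO(n)$ be arbitrary. By condition (iii), we can choose integers $m,l$ with $2\leq m<n$, $2\leq l<n$, $n=m+l-1$, elements $Y\in\OO(m)$, $Z\in\OO(l)$, and some index $1\leq i\leq m$ such that $X=Y\circ_i Z$. Applying the morphism compatibility with operadic compositions (Definition\,\ref{def2.12}\,(i)) to both $\f$ and $\g$, I would compute
\[
  \f_n(X)\,=\,\f_{m+l-1}(Y\circ_i Z)\,=\,\f_m(Y)\circ_i \f_l(Z),
\]
\[
  \g_n(X)\,=\,\g_{m+l-1}(Y\circ_i Z)\,=\,\g_m(Y)\circ_i \g_l(Z).
\]
Since both $m$ and $l$ are strictly less than $n$, the inductive hypothesis yields $\f_m(Y)=\g_m(Y)$ and $\f_l(Z)=\g_l(Z)$, whence the two right-hand sides coincide and $\f_n(X)=\g_n(X)$. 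As $X$ was arbitrary, $\f_n=\g_n$, completing the induction.

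There is essentially no serious obstacle here: the only delicate point is verifying that condition (iii) really does reduce every arity-$n$ element to components of arity strictly less than $n$, which is guaranteed by the inequalities $2\leq m<n$ and $2\leq l<n$ built into the hypothesis. No use of unitality or associativity beyond what is already encoded in Definition\,\ref{def2.12} is needed.
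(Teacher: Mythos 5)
Your proof is correct and follows exactly the same strong-induction strategy as the paper's own proof, which compresses the inductive step into a single sentence invoking condition (iii), Definition 2.15, and the induction hypothesis. You have merely spelled out that one-sentence step explicitly; nothing is missing and nothing differs in substance.
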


\begin{proof}
	We show that $\f_n=\g_n$ for any $n\geq 1$ by induction. The cases $n=1,2$ are just the conditions (i) and (ii) respectively. The other cases follow from the condition (iii), Definition\,$\ref{def2.12}$, and the induction hypothesis. 
	\qed
\end{proof}
\begin{cor}\label{cor2.17}
	Let $\PP$ be an operad and let $\f, \g :\Tt\to\PP$ be two morphisms of operads. Then we have $\f=\g$ if $\f_2=\g_2$.
\end{cor}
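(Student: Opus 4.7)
The plan is to apply Lemma~\ref{lemm2.14} to the operad $\OO=\Tt$ and the pair $\f,\g:\Tt\to\PP$. Hypotheses (i) and (ii) of the lemma come essentially for free: $\Tt(1)$ consists only of the exceptional tree, which is the unit $\unit$, and unit preservation (Definition~\ref{def2.12}(ii)) forces $\f_1(\unit)=\unit=\g_1(\unit)$; condition (ii) is precisely the hypothesis of the corollary.

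The substantive step is to check hypothesis (iii) for $\Tt$. Given $[T]\in\Tt(n)$ with $n\geq 3$, a short Euler-type count shows $T$ has $n+1$ valence-$1$ vertices (the root vertex together with the $n$ leaf vertices) and, by the trivalence condition combined with $V-E=1$, exactly $n-1$ internal vertices of valence $3$; in particular $T$ has at least one internal edge. Pick such an edge $e$ and apply Proposition~\ref{prop2.6} to obtain a decomposition $T\cong S_e\circ_i S_e'$. Since both endpoints of $e$ are valence $3$ in $T$ and the cutting procedure used in Proposition~\ref{prop2.6} simply replaces the half of $e$ at each endpoint by a new half-edge to the cut point $a$, these endpoints retain valence $3$ in the pieces $S_e$ and $S_e'$; all other vertices are untouched, so $S_e$ and $S_e'$ are both trivalent, giving $[S_e]\in\Tt(m)$ and $[S_e']\in\Tt(l)$ with $n=m+l-1$.

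It remains to verify the size constraint $2\leq m,l<n$. Applying the same Euler count to each piece shows that a trivalent rooted ribbon $k$-tree has $k-1$ internal vertices; since each piece contains the corresponding endpoint of $e$ as an internal vertex, we get $m,l\geq 2$, and together with $m+l-1=n$ this forces $m,l\leq n-1<n$. All three hypotheses of Lemma~\ref{lemm2.14} are thus satisfied, yielding $\f=\g$. The only delicate step is the trivalence-preservation argument for the pieces $S_e,S_e'$, which is precisely where the restriction to $\Tt$ (rather than the full tree operad $\T$) plays a role: for $\T$ the hypothesis (iii) would fail on the $n$-corolla, which has no internal edge at all.
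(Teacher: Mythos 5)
Your proof is correct and follows the same route as the paper: verify the three hypotheses of Lemma~\ref{lemm2.14}, where (i) comes from $\Tt(1)=\{\unit\}$ and unit-preservation, (ii) is the hypothesis, and (iii) comes from the existence of an internal edge in any trivalent rooted ribbon tree with at least $3$ leaves together with Proposition~\ref{prop2.6}. The paper states these facts more tersely; your Euler count ($V_{\mathrm{int}}=n-1$, hence $n-2$ internal edges), the check that the two pieces $S_e$, $S_e'$ remain trivalent, and the verification $2\leq m,l<n$ simply make explicit what the paper leaves to the reader, and your closing remark correctly pinpoints why the argument needs $\Tt$ rather than $\T$.
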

\begin{proof}
	We check that the conditions\,(i) and (iii) in Lemma\,\ref{lemm2.14} hold. The condition\,(i) holds because $\Tt(1)$ is the singleton of the unit $\unit$ and both $\f$ and $\g$ are morphisms of operads. 
	
	On the other hand, since a trivalent rooted ribbon tree with more than $3$ leaves has always an internal edge, then the condition\,(iii) follows from Proposition\,\ref{prop2.6}.
	\qed
\end{proof}

\section{The K\"ahler polarization}

In Subsection\,$3.1$, we see that the quantum Hilbert space via the K\"ahler polarization can be described as an invariant space of an $SO(3)$-representation, which was referred to in \cite{taka}. Based on this description, we construct the morphism $\fk:\Tc\to\W(\Z_{\geq 0})$ in Subsection\,$3.2$. The keys for this construction are the facts that any $SO(3)$-representation is completely reducible and all irreducible $SO(3)$-representations can be classified with odd numbers. 

\subsection{Quantization via the K\"ahler polarization}

Let $n\geq 3$ and let $\bm{r}=(r_0,\dots,r_{n-1})\in \R^n _{>0}$. First, we specify the K\"ahler structure on the polygon space $\M({\bm{r}})$ as follows. For $i=0,\dots,n-1$, 
we consider the sphere $S^2(r_i)$ as the K\"ahler manifold with the K\"ahler form $\omega_{S^2(r_i)}$ normalized by $\int_{S^2(r_i)}\,\omega_{S^2(r_i)} =2 r_i$. 
Since the standard $SO(3)$-action on $S^2(r_i)$ is Hamiltonian and the momentum map is the inclusion $S^2(r_i)\hookrightarrow \R^3$, then the diagonal $SO(3)$-action on $S^2(r_0)\times\cdots \times S^2(r_{n-1})$ is also Hamiltonian and the momentum map $\mu$ is given by $\mu(\bm{u})\,=\,u_0 +\cdots + u_{n-1}$. Now our polygon space $\M(\bm{r})$ is described as the quotient space
   \[
     \M(\bm{r})\,=\,{\mu}^{-1}(0)/SO(3). 
   \]
Note that the condition that $0$ is a regular value of $\mu$ and  $SO(3)$ acts on ${\mu}^{-1}(0)$ freely is characterized as the following condition on the edge-lengths $\bm{r}$:
   \begin{equation}
  \pm r_0\pm\cdots\pm r_{n-1}\neq 0. \label{eq1}
\end{equation}
Thus we always assume the condition\,(\ref{eq1}) on $\bm{r}$ so that the polygon space $\bigl(\M(\bm{r}), \omega_{ \M(\bm{r})}\bigr)$ can be regard as a smooth K\"ahler quotient of $\bigl(S^2(r_0)\times\cdots \times S^2(r_{n-1}),\, \omega_{S^2(r_0)}\oplus\cdots\oplus \omega_{S^2(r_{n-1})}\bigr)$. 

Now we assume $\bm{r}\in \Z^n _{>0}$ in addition to the condition\,(\ref{eq1}). This integral condition enables us to construct a prequantum line bundle $\Li(\bm{r})\to\M(\bm{r})$ as follows. For $i=0,\dots,n-1$, we denote by $L(r_i)$ the $r_i$-th tensor power of the holomorphic tangent bundle of $S^2(r_i)$, which is a prequantum line bundle over $S^2(r_i)$. Then a prequantum line bundle $L(\bm{r})$ over $S^2(r_0)\times\cdots \times S^2(r_{n-1})$ is given by 
\begin{equation*}
  L(\bm{r})\,=\,{\mathrm{pr}_0}^*\,L(r_0)\otimes\cdots\otimes {\mathrm{pr}_{n-1}}^*\,L(r_{n-1}),\label{eq_2}
\end{equation*}
where $\pr_i$ is the projection $S^2(r_0)\times\cdots \times S^2(r_{n-1})\to S^2(r_i)$. 
Since it is $SO(3)$-equivariant, the bundle $L(\bm{r})$ over $S^2(r_0)\times\cdots \times S^2(r_{n-1})$ descents to a bundle
\begin{equation*}
  \Li(\bm{r})=\left(L(\bm{r})\,\bigl|\,_{\mu^{-1}(0)}\right)\bigl/SO(3)\label{eq_3}
\end{equation*}
over $\M({\bm{r}})$. We find that $c_1(\Li(\bm{r}))=[\omega_{ \M(\bm{r})}]$, namely, the bundle $\Li(\bm{r})$ is a prequantum line bundle over $\M(\bm{r})$. Now we have completed the setting of quantization via the K\"ahler polarization. 

Recall that the quantum Hilbert space $\mathscr{H}_{\mathrm{K}\ddot{\mathrm{a}}\mathrm{h}}$ via the K\"ahler polarization is defined to be the space of holomorphic sections of $\Li(\bm{r})$. 
Next, we rewrite this space as an $SO(3)$-invariant space by using the so-called  ``quantization commutes with reduction'' theorem, which was conjectured by Guillemin and Sternberg\,\cite{GS} and has been proved and improved by several people e.g.\,\cite{brav}, \cite{Mei}, \cite{Tian}. In this paper, we follow a result of Braverman\,\cite{brav}.

Let $(M,\,\omega)$ be a K\"ahler manifold with a holomorphic and Hamiltonian action of a compact Lie group $G$ and let $L$ be a $G$-equivariant prequantum line bundle over $(M,\,\omega)$. We assume that $0$ is a regular value of the moment map $\mu$ and $G$ acts on ${\mu}^{-1}(0)$ freely. Then, we have the K\"ahler quotient $M_G={\mu}^{-1}(0)/G$ and the holomorphic line bundle $L_G$ over $M_G$ such that ${\pi}^* L_G\,=\,L|_{{\mu}^{-1}(0)}$, where $\pi:{\mu}^{-1}(0)\to M_G$ is the natural projection. The ``quantization commutes with reduction'' theorem is the following. 
\begin{theo}{\bf(\cite[Theorem\,1.4]{brav})}\label{thm3.0}
Under the assumption as above, we have
	\[H^j(M_G,L_G)\,=\,{H^j(M,L)}^G\text{ \ for any }j\geq 0. \]
\end{theo}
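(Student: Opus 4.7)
The strategy is to exploit the K\"ahler hypothesis in order to complexify $G$ and reduce the statement to a descent assertion on a principal bundle, after which the higher-degree case is handled by an analytic deformation argument. Since $G$ is compact and acts holomorphically and Hamiltonianly on $(M,\omega)$, the action extends to a holomorphic action of the complexification $G^{\mathbb{C}}$, and the equivariant prequantum line bundle $L$ extends to a $G^{\mathbb{C}}$-equivariant holomorphic line bundle. The Kempf--Ness theorem, under the present regularity and freeness assumptions, then produces an open set $M^{\mathrm{ss}}\subset M$ (the $G^{\mathbb{C}}$-saturation of $\mu^{-1}(0)$) such that $\pi : M^{\mathrm{ss}}\to M_G$ is a holomorphic principal $G^{\mathbb{C}}$-bundle satisfying $\pi^{*}L_G\cong L|_{M^{\mathrm{ss}}}$, and such that $\mu^{-1}(0)\hookrightarrow M^{\mathrm{ss}}$ is a $G$-equivariant deformation retract realized by the gradient flow of $\|\mu\|^{2}$.

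For $j=0$, I would argue that a holomorphic section of $L$ is $G$-invariant if and only if it is $G^{\mathbb{C}}$-invariant (because the invariance condition is holomorphic and $G$ is a totally real form of $G^{\mathbb{C}}$), and that any $G^{\mathbb{C}}$-invariant section of $L|_{M^{\mathrm{ss}}}$ descends uniquely along $\pi$ to a global section of $L_G$. It then suffices to show that the restriction $H^{0}(M,L)^{G^{\mathbb{C}}}\to H^{0}(M^{\mathrm{ss}},L)^{G^{\mathbb{C}}}$ is a bijection: injectivity is automatic, while surjectivity is a Hartogs-type extension across the unstable locus $M\setminus M^{\mathrm{ss}}$, carried out by flowing along $-\nabla\|\mu\|^{2}$, which pushes every point of $M$ into the $G^{\mathbb{C}}$-orbit of a point of $\mu^{-1}(0)$ and hence into $M^{\mathrm{ss}}$.

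For $j\geq 1$ the direct descent argument no longer applies, and this is where the main difficulty lies. Following Braverman, I would introduce a one-parameter family of deformed Dolbeault operators $\bar{\partial}_{t}=\bar{\partial}+t\,\iota_{\mathcal{X}}$, where $\mathcal{X}$ is the $(0,1)$-part of the Hamiltonian vector field of $\|\mu\|^{2}/2$, show by a standard elliptic deformation argument that the cohomology of the deformed complex is independent of $t$, and then analyze the $t\to\infty$ limit via Bochner--Kodaira-type estimates to conclude that the $L^{2}$-harmonic representatives of $H^{j}(M,L)^{G}$ concentrate on the critical set $\mu^{-1}(0)$ of $\|\mu\|^{2}$ and match, under pullback along $\pi$, the harmonic representatives of $H^{j}(M_G,L_G)$. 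The main obstacle I expect is precisely this concentration step: establishing the uniform analytic estimates on the deformed Laplacian near $\mu^{-1}(0)$, and ruling out spurious contributions from the other critical strata of $\|\mu\|^{2}$ located in the unstable locus. Once the localization is established, combining it with the principal-bundle structure of $\pi$ yields the desired isomorphism $H^{j}(M_G,L_G)\cong H^{j}(M,L)^{G}$ in every degree $j\geq 0$.
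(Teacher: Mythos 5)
The paper does not prove this statement: it is imported verbatim as Braverman's Theorem\,1.4 from \cite{brav} and used as a black box in the proof of Proposition\,\ref{prop3.1}. There is therefore no in-paper proof to compare your proposal against, and the honest first remark is that in the context of this paper you would not need to prove it at all.

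Taken on its own terms, your sketch is broadly faithful to Braverman's actual strategy: the Kempf--Ness / Kirwan picture, with $\mu^{-1}(0)$ a $G$-equivariant deformation retract of the semistable locus $M^{\mathrm{ss}}$ via the downward gradient flow of $\|\mu\|^2$, and a Witten-type deformation of the Dolbeault complex forcing harmonic forms to concentrate near $\mu^{-1}(0)$. Two points deserve flagging. First, the operator you write, $\bar\partial_t=\bar\partial+t\,\iota_{\mathcal X}$, is not well-formed: $\bar\partial$ raises antiholomorphic degree while contraction lowers it, so the two summands live in different degrees and cannot be added to form a new Dolbeault-type operator. Braverman's deformation is by a zeroth-order term built from the moment map and the fundamental vector fields (in the spirit of Tian--Zhang and the conjugation $e^{-t\phi}\bar\partial e^{t\phi}=\bar\partial+t\,\bar\partial\phi\wedge$), chosen so the deformed operator still has pure Dolbeault degree; the precise form matters for the Bochner--Kodaira estimates. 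Second, for $j=0$ the extension across $M\setminus M^{\mathrm{ss}}$ is not a bare Hartogs argument: the unstable locus can have complex codimension one, so one cannot quote removable singularities off the shelf. The actual $j=0$ argument (going back to Guillemin--Sternberg) exploits $G^{\mathbb{C}}$-invariance and the control of $\|s\|^2$ along the flow of $-\nabla\|\mu\|^2$ to produce the extension; your phrase ``Hartogs-type extension'' is pointing in the right direction but omits the part that does the work. These are gaps relative to Braverman's paper, not relative to the paper under review, which simply cites the result.
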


Now we apply Theorem\,$\ref{thm3.0}$ to the case when 
\[
  G=SO(3),\ (M,L)=\bigl(\,S^2(r_0)\times\cdots \times S^2(r_{n-1}),L(\bm{r})\,\bigr) \text{, and } (M_G,L_G)=\bigl(\M(\bm{r}),\Li(\bm{r})\bigr).
\]
\begin{prop}\label{prop3.1}
	For $\bm{r}\in \Z^n _{>0}$ satisfying the condition\,$(\ref{eq1})$, we have
	\[
	  H^j\bigl(\M(\bm{r}),\Oh_{\Li(\bm{r})}\bigr) = \left\{
\begin{array}{ll}
{\hspace{-1mm}\Bigl(H^0\bigl(S^2(r_0),\Oh_{L(r_0)}\bigr)\otimes\cdots\otimes H^0\bigl(S^2(r_{n-1}),\Oh_{L(r_{n-1})}\bigr)\Bigr)}^{SO(3)} & \text{if } j=0,\vspace{2mm}\\
\hspace{38mm}0 & \text{if } j>0.
\end{array}
\right. 
	\]
\end{prop}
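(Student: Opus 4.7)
The plan is to apply Theorem~\ref{thm3.0} directly to the setup, then combine it with the Künneth formula and the standard vanishing of higher cohomology for positive line bundles on $\mathbb{CP}^1$.

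First I would verify the hypotheses of Theorem~\ref{thm3.0} in our setting with $G=SO(3)$, $(M,L)=\bigl(S^2(r_0)\times\cdots\times S^2(r_{n-1}),L(\bm{r})\bigr)$. The product $M$ is Kähler and the diagonal $SO(3)$-action is holomorphic and Hamiltonian with momentum map $\mu(\bm{u})=u_0+\cdots+u_{n-1}$, as recalled at the beginning of Subsection~3.1. Each factor $L(r_i)$ is an $SO(3)$-equivariant prequantum line bundle on $S^2(r_i)$, so its external tensor product $L(\bm{r})$ is an $SO(3)$-equivariant prequantum line bundle on $M$. Finally, the condition~(\ref{eq1}) on $\bm{r}$ is precisely what ensures that $0$ is a regular value of $\mu$ and that $SO(3)$ acts freely on $\mu^{-1}(0)$, so Theorem~\ref{thm3.0} applies and yields
\[
H^j\bigl(\M(\bm{r}),\Oh_{\Li(\bm{r})}\bigr)\,=\,H^j\bigl(M,\Oh_{L(\bm{r})}\bigr)^{SO(3)} \quad \text{for all } j\geq 0.
\]

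Next I would compute the right-hand side using Künneth. Since $L(\bm{r})$ is the external tensor product of the $L(r_i)$, the Künneth formula for sheaf cohomology on a product of compact complex manifolds gives
\[
H^j\bigl(M,\Oh_{L(\bm{r})}\bigr)\,=\,\bigoplus_{j_0+\cdots+j_{n-1}=j}\,H^{j_0}\bigl(S^2(r_0),\Oh_{L(r_0)}\bigr)\otimes\cdots\otimes H^{j_{n-1}}\bigl(S^2(r_{n-1}),\Oh_{L(r_{n-1})}\bigr),
\]
as an $SO(3)$-representation via the diagonal action.

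It then remains to show that the higher-degree terms vanish. Each $L(r_i)$ is the $r_i$-th tensor power of the holomorphic tangent bundle of $S^2(r_i)\cong\mathbb{CP}^1$, which is isomorphic to $\Oh_{\mathbb{CP}^1}(2r_i)$. Since $r_i\geq 1$, this is a positive line bundle, so by the standard cohomology computation (or Kodaira vanishing) $H^{j_i}\bigl(S^2(r_i),\Oh_{L(r_i)}\bigr)=0$ for all $j_i>0$. Hence in the Künneth sum only the term with $j_0=\cdots=j_{n-1}=0$ survives, so $H^j\bigl(M,\Oh_{L(\bm{r})}\bigr)=0$ for $j>0$ and $H^0\bigl(M,\Oh_{L(\bm{r})}\bigr)=H^0\bigl(S^2(r_0),\Oh_{L(r_0)}\bigr)\otimes\cdots\otimes H^0\bigl(S^2(r_{n-1}),\Oh_{L(r_{n-1})}\bigr)$. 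Taking $SO(3)$-invariants gives exactly the claimed formula.

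No step presents a real obstacle: once Theorem~\ref{thm3.0} has been invoked, the remaining content is the Künneth decomposition and a standard line-bundle vanishing on $\mathbb{CP}^1$. The only point that requires a moment's care is verifying the regularity/freeness hypothesis of Theorem~\ref{thm3.0}, which is guaranteed by condition~(\ref{eq1}) and has already been discussed immediately before the proposition.
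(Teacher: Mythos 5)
Your proof is correct and follows essentially the same route as the paper: invoke Theorem~\ref{thm3.0} to pass to $SO(3)$-invariants of the ambient product, then use Künneth and positivity of each $L(r_i)\cong\Oh_{\mathbb{CP}^1}(2r_i)$ to kill the higher cohomology. The only difference is cosmetic: you state the Künneth decomposition with the full direct sum over $j_0+\cdots+j_{n-1}=j$, whereas the paper writes the shorthand $H^j\otimes\cdots\otimes H^j$ (which coincides with the true Künneth answer only because the higher cohomologies of each factor vanish), so your version is in fact slightly more careful.
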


\begin{proof}
	By Theorem\,$\ref{thm3.0}$, we have
	\begin{align*}
	  H^j\bigl(\M(\bm{r}),\Oh_{\Li(\bm{r})}\bigr) &
	  = {\Bigl(H^j\left(S^2(r_0)\times\cdots \times S^2(r_{n-1}),\Oh_{L(\bm{r})}\right)\Bigr)}^{SO(3)}\\
	  &= \Bigl(H^j\bigl(S^2(r_0),\Oh_{L(r_0)}\bigr)\otimes\cdots\otimes H^j\bigl(S^2(r_{n-1}),\Oh_{L(r_{n-1})}\bigr)\Bigr)^{SO(3)}
	\end{align*}
	for any $j\geq 0$. By taking the definition of $L(r_i)$ and positivity of $r_i$ for $i=0,\dots,n-1$ into account, we obtain the proposition. 
	\qed
\end{proof}
\begin{rem}
	When $n$ is odd and $\bm{r}=(1,\dots,1)$, the polygon space is a Fano variety \cite[Corollary\,2.3.3]{kly}. In this case, the vanishing of the higher cohomologies is also obtained from the Kodaira-Nakano vanishing theorem without the ``quantization commutes with reduction'' theorem. 
\end{rem}
\subsection{The morphism of operads associated to the K\"ahler polarization}

First, in the case of compact $\Lie$ groups, we see that multiplicities of an irreducible component in tensor representations give a morphism of operads. 

\begin{prop}\label{prop3.4}
	Let $G$ be a compact Lie group and denote by $\widehat{G}$ the set of all equivalent classes of an irreducible finite-dimensional complex representation of $G$. We define the sequence $\mg=\bigl\{{(\mg)}_n : \Tc(n)\to \W(\widehat{G})(n)\bigr\}_{n\geq 1}$ of maps by
	\[
	  {(\mg)}_n (n)\,= \,\Bigl((\,W ; V_1,\dots,V_n)\longmapsto[\,V_1\otimes\cdots\otimes V_n: W\,]\,\Bigr), 
	\]
	where $[\,V_1\otimes\cdots\otimes V_n: W\,]$ is the multiplicity of an irreducible representation $W$ in $V_1\otimes\cdots\otimes V_n$. Then the sequence $\mg$ is a morphism of operads from $\Tc$ to $\W(\widehat{G})$. 
\end{prop}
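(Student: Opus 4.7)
The plan is to verify the three conditions that make $\mg$ a morphism of operads: (a) each $(\mg)_n(n)$ actually belongs to $\W(\widehat{G})(n)$, i.e.\ has finite support in the $W$-slot; (b) the sequence commutes with operadic compositions; and (c) the unit is preserved.

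Conditions (a) and (c) are quick. For (a), since $G$ is compact each $V_j\in\widehat{G}$ is finite-dimensional, hence $V_1\otimes\cdots\otimes V_n$ is finite-dimensional and by complete reducibility only finitely many $W\in\widehat{G}$ occur with nonzero multiplicity. For (c), the unit of $\Tc$ is $1\in\Tc(1)$, and $(\mg)_1(1)(W;V)=[V:W]$ equals $1$ when $W\cong V$ and $0$ otherwise by Schur's lemma, which coincides with the unit of $\W(\widehat{G})$ described in Proposition\,\ref{prop2.9}.

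The main step is (b). Given $1\leq i\leq n$ and $m\geq 1$, the identity to establish reads
\[
  [V_1\otimes\cdots\otimes V_{n+m-1}:W]\,=\,\sum_{U\in\widehat{G}}\,[V_1\otimes\cdots\otimes V_{i-1}\otimes U\otimes V_{i+m}\otimes\cdots\otimes V_{n+m-1}:W]\cdot [V_i\otimes\cdots\otimes V_{i+m-1}:U].
\]
I would prove this by first invoking complete reducibility of $G$-representations to write
$V_i\otimes\cdots\otimes V_{i+m-1}\cong\bigoplus_{U\in\widehat{G}}U^{\oplus [V_i\otimes\cdots\otimes V_{i+m-1}:U]}$,
then tensoring on the left by $V_1\otimes\cdots\otimes V_{i-1}$ and on the right by $V_{i+m}\otimes\cdots\otimes V_{n+m-1}$ using distributivity of $\otimes$ over $\oplus$, and finally reading off the multiplicity of $W$ from both sides. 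Note that each sum is finite by the observation made in (a).

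The only real obstacle is bookkeeping: one must verify that the positions of $U$, $W$, and the $V_j$'s on the right-hand side of the display above actually coincide with the abstract operadic formula $f(W;\bm{V}^{i,m;U})\cdot g(U;\bm{V}_{i,m})$ from Definition\,\ref{def1.10}, across the three cases $i=1$, $2\leq i\leq n-1$, and $i=n$ of (\ref{eq1.101}). Once the slot conventions are aligned, the claim reduces to bilinearity of $\otimes$ and complete reducibility, so no genuinely new ingredient beyond standard representation theory of compact Lie groups is needed.
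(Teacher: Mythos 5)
Your proof is correct and takes essentially the same approach as the paper: verify the operadic composition identity by decomposing the middle block $V_i\otimes\cdots\otimes V_{i+m-1}$ into irreducibles and using bilinearity of $\otimes$, then check unit preservation via Schur's lemma. You are slightly more careful than the paper in that you explicitly verify that $(\mg)_n(n)$ has finite support and hence lands in $\W(\widehat{G})(n)$, a point the paper leaves implicit.
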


\begin{proof}
Let $n, m\geq 1$, $1\leq i\leq n$, $W \in\widehat{G}$, and $\bm{V}=(V_1,\dots,V_{n+m-1})\in{\widehat{G}}^{\,n+m-1}$. Then we have the following equalities:
\begin{align*}
	&\bigl({(\mg)}_{n+m-1} (n\circ_i m)\bigr)(W;\bm{V})\\& \hspace{10mm}\,=\, [\,V_1\otimes\cdots\otimes V_{n+m-1}: W\,]\\
	& \hspace{10mm}\,=\,\sum_{U\in \widehat{G}} [\,V_1\otimes\cdots V_{i-1}\otimes U\otimes V_{i+m}\otimes V_{n+m-1}: W\,]\cdot [\,V_i\otimes\cdots\otimes V_{i+m-1}: U\,]\\
	& \hspace{10mm}\,=\, \sum_{U\in \widehat{G}} \bigl({(\mg)}_n (n)\bigr)(W; \bm{V}^{i,m;U})\cdot \bigl({(\mg)}_m (m)\bigr)(U;\bm{V}_{i,m})\\
	& \hspace{10mm}\,=\, \bigl(\,{(\mg)}_n (n)\, \circ_i \,{(\mg)}_m (m)\,\bigr) (W;\bm{V}),
\end{align*}
where the second equality follows from the irreducible decomposition 
\[
  V_i\otimes\cdots\otimes V_{i+m-1}\,=\, \bigoplus_{U\in \widehat{G}} \,{V_{\mathrm{triv}}}^{\oplus [\,V_i\otimes\cdots\otimes V_{i+m-1}: U\,]}\otimes U.
\]
(Here $V_{\mathrm{triv}}$ is the equivalent class of the trivial irreducible representations.)  At the third equality, we use the notations (\ref{eq1.101}) and (\ref{eq1.102}) for $\bm{V}=(V_1,\dots,V_{n+m-1})$ and $U$. 

On the other hand, it is clear that ${(\mg)}_1(\unit)=\unit$ by definition. (Recall that the unit of $\W(\widehat{G})$ is given in Proposition\,\ref{prop2.9}.) Now the proposition is proved. 
	\qed
\end{proof}

Now, we focus on $SO(3)$-representations. We define the map $\mathrm{Dim}: \widehat{SO(3)}\to \Z_{>0}$ by assigning each equivalent class to its dimension. It is well-known that this map is bijective onto the set of odd numbers.   
\begin{defi}\label{def3.5}
   We define a sequence $\fk=\{{(\fk)}_n : \Tc(n)\to \W(\Z_{\geq 0})(n)\}_{n\geq 1}$ of maps by
	\[
	  {(\fk)}_{n}(n)\,=\,\Bigl((d ; \bm{c})\longmapsto [\,R(c_1)\otimes\cdots\otimes R(c_n) : R(d)\,]\Bigr),
	\]
   where $R$ is the map $\Z_{\geq 0}\to \widehat{SO(3)}$ given by $R(m)=\mathrm{Dim}^{-1}(2m+1)$. 
\end{defi}

\begin{prop}\label{prop3.6}
The sequence $\fk$ is a morphism of operads from $\Tc$ to $\W(\Z_{\geq 0})$. 
\end{prop}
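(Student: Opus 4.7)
The plan is to exhibit $\fk$ as the composition of two morphisms of operads that have already been established in the excerpt, so that the proposition reduces to a compatibility check. Applying Proposition\,\ref{prop3.4} to the compact Lie group $G = SO(3)$, we obtain a morphism of operads $\ms : \Tc \to \W(\widehat{SO(3)})$ whose $n$-th component sends the $n$-corolla to the multiplicity function
\[
  (W; V_1, \ldots, V_n) \longmapsto [\,V_1 \otimes \cdots \otimes V_n : W\,].
\]
On the other hand, the bijection $R : \Z_{\geq 0} \to \widehat{SO(3)}$ from Definition\,\ref{def3.5} is in particular a map of sets, so by Example\,\ref{ex2.13} it induces a pull-back morphism of operads $R^* : \W(\widehat{SO(3)}) \to \W(\Z_{\geq 0})$. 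Since a composition of morphisms of operads is again a morphism of operads, the sequence $R^* \circ \ms$ is a morphism from $\Tc$ to $\W(\Z_{\geq 0})$.

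It then remains to verify that $R^* \circ \ms$ coincides with $\fk$. Unwinding the two definitions, for any $n \geq 1$, $d \in \Z_{\geq 0}$, and $\bm{c} = (c_1, \ldots, c_n) \in \Z_{\geq 0}^n$ we have
\[
  {(R^* \circ \ms)}_n(n)(d; \bm{c}) \,=\, {(\ms)}_n(n)\bigl(R(d); R(c_1), \ldots, R(c_n)\bigr) \,=\, [\,R(c_1) \otimes \cdots \otimes R(c_n) : R(d)\,],
\]
which is exactly ${(\fk)}_n(n)(d; \bm{c})$ as in Definition\,\ref{def3.5}. Hence $\fk = R^* \circ \ms$, and the proposition follows.

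The only point that requires mention before invoking Proposition\,\ref{prop3.4} is the finiteness condition $N(f, \bm{c}) < \infty$ built into Definition\,\ref{def1.10}: for each fixed $\bm{c}$, the tensor product $R(c_1) \otimes \cdots \otimes R(c_n)$ is finite-dimensional, so only finitely many irreducibles appear with nonzero multiplicity, guaranteeing that ${(\fk)}_n(n)$ indeed lands in $\W(\Z_{\geq 0})(n)$. I do not anticipate any real obstacle; the argument is essentially a formal consequence of the two constructions already in place, and the nontrivial content of $\fk$ is concentrated in the Clebsch--Gordan-type decomposition of tensor products of $SO(3)$-representations, which is encoded in Proposition\,\ref{prop3.4} itself.
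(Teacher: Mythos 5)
Your proof is correct and takes essentially the same approach as the paper: both express $\fk$ as the composition $R^* \circ \ms$ of the morphism from Proposition\,\ref{prop3.4} (with $G = SO(3)$) and the pull-back morphism from Example\,\ref{ex2.13}. You simply spell out the unwinding of definitions and the finiteness check a bit more explicitly than the paper does.
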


\begin{proof}
The sequence $\fk$ is given by the composition of morphisms of operads
\[
     \fk :\,\Tc\xrightarrow{\ms\ } \W(\widehat{SO(3)})\xrightarrow{\ \ R^*\ } \W(\Z_{\geq 0}), 
   \]
   where $\ms$ is the morphism given in Proposition\,\ref{prop3.4} for $G=SO(3)$ and $R^*$ is the morphism induced by the map $R$ (see Example\,\ref{ex2.13}). 
   This proves the proposition.
   \qed
\end{proof}

\begin{lemm}\label{prop3.7}	
	If $n=2$, we have for $(d;c_1, c_2)\in\Z_{\geq 0} \times \Z^2 _{\geq 0}$
	\[
	  \bigl(\,{(\fk)}_2 (2)\,\bigr)(d;c_1, c_2)\,=\, \left\{
\begin{array}{ll}
1 & \text{if \,} |c_1 - c_2|\leq d\leq c_1 + c_2,\\
0 & \text{otherwise}.
\end{array}
\right. 
	\]
\end{lemm}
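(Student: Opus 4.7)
The plan is to unwind the definition and reduce the statement to the classical Clebsch--Gordan rule for $SO(3)$. By Definition~\ref{def3.5}, we have
\[
  \bigl((\fk)_2(2)\bigr)(d;c_1,c_2) \,=\, [\,R(c_1)\otimes R(c_2) : R(d)\,],
\]
where $R(m)$ is the unique irreducible $SO(3)$-representation of dimension $2m+1$, i.e.\ the integer-spin representation of spin $m$. So what must be shown is the Clebsch--Gordan decomposition
\[
  R(c_1)\otimes R(c_2) \,\cong\, \bigoplus_{d=|c_1-c_2|}^{c_1+c_2} R(d),
\]
each summand occurring with multiplicity one, and no other $R(d)$ appearing.

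To prove this I would compute characters on a maximal torus. Lifting to the double cover $SU(2)$ (the $R(m)$ being exactly the $SU(2)$-representations that descend to $SO(3)$), the character of $R(m)$ on a torus element with eigenvalues $t,t^{-1}$ is
\[
  \chi_{R(m)}(t) \,=\, t^{2m}+t^{2m-2}+\cdots+t^{-2m}.
\]
Assuming without loss of generality that $c_1\leq c_2$, a short telescoping expansion of the product $\chi_{R(c_1)}(t)\chi_{R(c_2)}(t)$ yields $\sum_{d=c_2-c_1}^{c_1+c_2}\chi_{R(d)}(t)$. Since the characters of inequivalent irreducibles are linearly independent in the representation ring, and since any $SO(3)$-representation is completely reducible, the multiplicity of $R(d)$ in $R(c_1)\otimes R(c_2)$ is $1$ for $|c_1-c_2|\leq d\leq c_1+c_2$ and $0$ otherwise, which is what the lemma asserts.

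There is essentially no obstacle here beyond invoking (or reproducing) the character calculation: the Clebsch--Gordan rule is classical. The only point to keep track of is that the tensor product of two integer-spin $SU(2)$-representations again decomposes into integer-spin pieces, so everything stays on the $SO(3)$ side and the bijection $R:\Z_{\geq 0}\to\widehat{SO(3)}$ used in Definition~\ref{def3.5} applies throughout. Alternatively, one may simply cite Clebsch--Gordan as a standard fact and directly read off the multiplicity.
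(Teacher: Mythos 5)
Your proof is correct and follows essentially the same approach as the paper: unwind Definition~\ref{def3.5} to reduce the statement to the Clebsch--Gordan rule for $SO(3)$. The paper simply cites Clebsch--Gordan as a standard fact, whereas you also sketch the character-theoretic derivation; that extra detail is fine but not required.
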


\begin{proof}
	By definition, we have
	\[
	  \bigl(\,{(\fk)}_2 (2)\,\bigr)(d;c_1, c_2)\,=\,[\,R(c_1)\otimes R(c_2) : R(d)\,].
	\]
	This multiplicity is computed from the Clebsch-Gordan rule for $SO(3)$ (see e.g. \cite{YS}), which proves the assertion.
	\qed
\end{proof}

Finally, we see that the morphism $\fk:\Tc\to\W(\Z_{\geq 0})$ controls the dimension of the space of holomorphic sections.

\begin{prop}\label{prop3.5}
	Suppose $n\geq 3$. Then we have the following for any $n$-tuple $\bm{r}=(r_0,r_1,\dots, r_{n-1})$ of positive integers satisfying the condition\,$(\ref{eq1})$:
	\[
	  \bigl(\,{(\fk)}_{n-1} (n-1)\,\bigr)(r_0;r_1,\dots, r_{n-1})\,=\,\dim H^0\bigl(\M (\bm{r}),\,\Oh_{\Li (\bm{r})}\bigr). 
	\]
\end{prop}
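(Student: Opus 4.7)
The plan is to combine Proposition\,\ref{prop3.1} with the explicit identification of the $SO(3)$-representations $H^0\bigl(S^2(r_i),\,\Oh_{L(r_i)}\bigr)$, and then translate the $SO(3)$-invariant subspace into the multiplicity appearing in Definition\,\ref{def3.5}.

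First, I would invoke Proposition\,\ref{prop3.1}, which rewrites
\[
	H^0\bigl(\M(\bm{r}),\,\Oh_{\Li(\bm{r})}\bigr)\,=\,\Bigl(H^0\bigl(S^2(r_0),\Oh_{L(r_0)}\bigr)\otimes\cdots\otimes H^0\bigl(S^2(r_{n-1}),\Oh_{L(r_{n-1})}\bigr)\Bigr)^{SO(3)}.
\]
Next, I would identify each factor: since $S^2(r_i)\cong\mathbb{CP}^1$ as a K\"ahler manifold and its holomorphic tangent bundle is $\Oh(2)$, the prequantum line bundle $L(r_i)$ is $\Oh(2r_i)$, whose space of holomorphic sections has dimension $2r_i+1$ and carries the unique irreducible $SO(3)$-representation of that dimension. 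In the notation of Definition\,\ref{def3.5}, this gives the $SO(3)$-equivariant identification $H^0\bigl(S^2(r_i),\Oh_{L(r_i)}\bigr)\cong R(r_i)$.

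Then I would use the general fact that for a compact Lie group $G$ and finite-dimensional representations $W,U$, one has $\dim(W\otimes U)^G=[U:W^*]$, combined with the self-duality $R(r_0)^*\cong R(r_0)$ of irreducible $SO(3)$-representations, to obtain
\[
	\dim H^0\bigl(\M(\bm{r}),\,\Oh_{\Li(\bm{r})}\bigr)\,=\,\bigl[\,R(r_1)\otimes\cdots\otimes R(r_{n-1}):R(r_0)\,\bigr].
\]
By Definition\,\ref{def3.5}, the right-hand side is precisely $\bigl({(\fk)}_{n-1}(n-1)\bigr)(r_0;r_1,\dots,r_{n-1})$, completing the proof.

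There is no serious obstacle here; the main thing is to make the identification $H^0\bigl(S^2(r_i),\Oh_{L(r_i)}\bigr)\cong R(r_i)$ precise and to justify the step $\dim(V\otimes W)^{SO(3)}=[W:V]$ via self-duality. The former can either be cited from the Borel--Weil description of irreducible $SO(3)$-representations on $\mathbb{CP}^1$ or verified by direct inspection of homogeneous polynomials of degree $2r_i$; the latter is a standard consequence of Schur's lemma.
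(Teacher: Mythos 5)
Your proof is correct and follows the same route as the paper: apply Proposition\,\ref{prop3.1}, identify each $H^0\bigl(S^2(r_i),\Oh_{L(r_i)}\bigr)$ with the irreducible representation $R(r_i)$ via Borel--Weil, and then pass from the $SO(3)$-invariants to the multiplicity $[\,R(r_1)\otimes\cdots\otimes R(r_{n-1}):R(r_0)\,]$ using self-duality of $SO(3)$-irreducibles. The paper's proof is exactly this chain of equalities, so there is nothing to add.
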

\begin{proof}
	The Borel-Weil theorem tells us that each $H^0\bigl(S^2(r_i),\Oh_{L(r_i)}\bigr)$ in Proposition\,$\ref{prop3.1}$ is an irreducible $SO(3)$-representation of dimension $2r_i +1$. Therefore, we have
	 \begin{align*}
	  \dim H^0\bigl(\M(\bm{r}),\Oh_{\Li(\bm{r})}\bigr)&\,=\dim\,\bigl(\,R(r_0)\otimes R(r_1)\otimes\cdots\otimes R(r_{n-1})\,\bigr)^{SO(3)}\\
	  &\,=\,\dim\,\bigl(\,R(r_0)^*\otimes R(r_1)\otimes\cdots\otimes R(r_{n-1})\,\bigr)^{SO(3)}\\
      &\,=\,[\,R(r_1)\otimes\cdots\otimes R(r_{n-1}) : R(r_0)\,].
	\end{align*}
	This proves the assertion.
	\qed
\end{proof}

\section{The real polarization}

In Subsection\,$4.1$, we define the bending system associated to any triangulation of polygons by using its dual graph, a trivalent rooted ribbon tree. In addition, we rewrite the number of the associated lattice points to fit our operadic formulation. With this description, we construct the morphism $\fr:\Tt\to\W(\Z_{\geq 0})$ to the real polarization in Subsection\,$4.2$. We also comment on Kamiyama's recurrence relation in our framework. 

\subsection{The bending system}

Let $n\geq 3$ and $\bm{r}=(r_0,\dots,r_{n-1})\in\R^n _{> 0}$. 
As in Figure\,\ref{fig6}, a decomposition of a trivalent rooted ribbon $(n-1)$-tree induces that of an $n$-gon. Then, the length of the new side-edges of two polygons defines a function on the polygon space $\M(\bm{r})$, called the {\it bending Hamiltonian}. 
The precise definition is the following. 
\begin{defi}[Kapovich and Millson\,\cite{kapo}]\label{defi4} 
Let $T$ be a trivalent rooted ribbon $(n-1)$-tree. As in the proof of Proposition\,\ref{prop2.6}, an edge $e\in E(T)$ determines the grafting decomposition of $T$ into rooted ribbon trees $S_e$ and $S_e ^{\prime}$. This also induces the decomposition
 \[\{0,\dots, n-1\}\,=\,I_e\sqcup I_e ^{\prime},\] 
where $I_e$ (resp. $I_e ^{\prime}$) is the set of numbers $i=0,\dots, n-1$ such that the $i$-th external vertex $(v_T)_i$ is contained in $S_e$ (resp. $S_e ^{\prime}$). Then we define the function $b_e : \M(\bm{r})\to \R$ by
\[
  b_e [\bm{u}]\,=\,\Biggl\|\sum_{i\,\in\, I_e} u_i\Biggr\|\,=\,\Biggl\|\sum_{i\,\in\, I_e ^{\prime}} u_i\Biggr\|,
\] 
which is called the {\it bending Hamiltonian}.
\end{defi}

\begin{figure}[!h]
\begin{minipage}[b]{0.49\columnwidth}
\centering
\begin{tikzpicture}[scale=0.7, transform shape, line width=0.7pt]
\draw  (1.2,3)--(1,5)--(3,6.5)--(4.5,6)--(5.3,3)--cycle;
\draw[red] (2,6.5)--(1.73,4.83);
\draw[red] (0.5,4)--(1.73,4.83);
\draw[red] (2.9,5.16)--(1.73,4.83);
\draw[red] (2.9,5.16)--(4,7);
\draw[red] (2.9,5.16)--(3.66,4);
\draw[red] (5.49,5)--(3.66,4);
\draw[red] (3.66,2.3)--(3.66,4);
 \draw (3.2,3)node[below]{$u_2$};
 \draw (1.1,3.8)node[left]{$u_1$};
 \draw (1.8,5.6)node[above left]{$u_0$};
 \draw (3.8,6.2)node[above right]{$u_4$};
 \draw (5,4.5)node[right]{$u_3$};
 \draw[red] (2,6.5)node[above]{$0$};
 \draw[red] (0.5,4)node[left]{$1$};
 \draw[red] (4,7)node[above right]{$4$};
 \draw[red] (5.49,5)node[right]{$3$};
 \draw[red] (3.66,2.3)node[below]{$2$};
\end{tikzpicture}
\end{minipage}
\begin{minipage}[b]{0.49\columnwidth}
\centering
\begin{tikzpicture}[scale=0.7, transform shape, line width=0.7pt]
\draw  (1.2,3)--(1,5)--(3,6.5)--(4.5,6)--cycle;
\draw[red] (2,6.5)--(1.73,4.83);
\draw[red] (0.5,4)--(1.73,4.83);
\draw[red] (2.9,5.16)--(1.73,4.83);
\draw[red] (2.9,5.16)--(4,7);
\draw[red] (2.9,5.16)--(3.4,4.6);
 \draw (1.1,3.8)node[left]{$u_1$};
 \draw (1.8,5.6)node[above left]{$u_0$};
 \draw (3.8,6.2)node[above right]{$u_4$};
 \draw (5.3,5.5)--(2,2.5)--(6.1,2.5)--cycle;
\draw (5.8,4)node[right]{$u_3$};
\draw (4,2.5)node[below]{$u_2$};
\draw[red] (3.63,4.3)--(4.43,3.5);
\draw[red] (4.43,3.5)--(4.43,1.8);
\draw[red] (4.43,3.5)--(6.2,4.5);
\draw[red] (2,6.5)node[above]{$0$};
 \draw[red] (0.5,4)node[left]{$1$};
 \draw[red] (4,7)node[above right]{$4$};
 \draw[red] (6.2,4.5)node[right]{$3$};
 \draw[red] (4.43,1.8)node[below]{$2$};
\end{tikzpicture}
\end{minipage}
\caption{decomposing a polygon.}\label{fig6}
\end{figure}
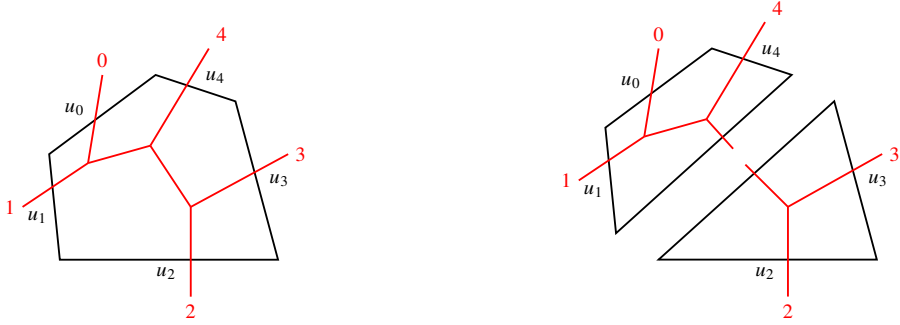
Here are the elemental properties of the bending Hamiltonians. 
\begin{lemm}\label{lem4.2}
	We have the followings on the bending Hamiltonian $b_e$'s.
	
	\noindent
	$(1)$ $|b_{e^{\prime}}-b_{e^{\prime\prime}}|\leq b_e \leq b_{e^{\prime}}+b_{e^{\prime\prime}}$ if $e$, $e^{\prime}$ and $e^{\prime\prime}$ are adjacent to a common vertex.
		
	\noindent
	$(2)$ $b_e$ is the constant function with value $r_i$ if $e$ is adjacent to the $i$-th external vertex.

\end{lemm}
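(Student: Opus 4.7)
The plan is to reduce both statements to the momentum map constraint $u_0+\cdots+u_{n-1}=0$ valid on $\M(\bm{r})$, combined with the triangle inequality in $\R^3$.

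For part (2), I would note that if $e$ is adjacent to the $i$-th external vertex $(v_T)_i$, then removing $e$ from $T$ produces one component consisting of just $(v_T)_i$ together with (half of) $e$, and another component containing the remaining $n-1$ external vertices. Hence in the notation of Definition\,\ref{defi4} one of the two sets $I_e, I_e^{\prime}$ is exactly $\{i\}$, and consequently
\[
b_e[\bm{u}]\,=\,\|u_i\|\,=\,r_i
\]
for every $[\bm{u}]\in\M(\bm{r})$, because $u_i\in S^2(r_i)$. This is the assertion of (2).

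For part (1), since external vertices of a trivalent tree are univalent, the common vertex $v$ of $e, e^{\prime}, e^{\prime\prime}$ must be internal, hence of valence exactly $3$, and $e, e^{\prime}, e^{\prime\prime}$ are precisely the three edges incident to $v$. Removing the open star at $v$ partitions $T$ into three connected subtrees, inducing a partition $\{0,\dots,n-1\}=J\sqcup J^{\prime}\sqcup J^{\prime\prime}$, where $J$ (resp.\ $J^{\prime}$, $J^{\prime\prime}$) collects the indices of external vertices lying in the piece attached to $e$ (resp.\ $e^{\prime}$, $e^{\prime\prime}$). Setting
\[
A=\sum_{i\in J}u_i,\qquad A^{\prime}=\sum_{i\in J^{\prime}}u_i,\qquad A^{\prime\prime}=\sum_{i\in J^{\prime\prime}}u_i\,\in\R^3,
\]
the momentum constraint gives $A+A^{\prime}+A^{\prime\prime}=0$. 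A straightforward case analysis (depending on which of the three subtrees contains the root external vertex) shows that for each of the three edges, the set $I_{\bullet}$ from Definition\,\ref{defi4} either equals the relevant $J$-piece or its complement, so by the identity $\|\sum_{i\in I_{\bullet}}u_i\|=\|\sum_{i\in I_{\bullet}^{\prime}}u_i\|$ we obtain $b_e=\|A\|$, $b_{e^{\prime}}=\|A^{\prime}\|$, and $b_{e^{\prime\prime}}=\|A^{\prime\prime}\|$.

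It then remains to substitute $A=-(A^{\prime}+A^{\prime\prime})$ into the triangle inequality in $\R^3$:
\[
\bigl|\,\|A^{\prime}\|-\|A^{\prime\prime}\|\,\bigr|\,\leq\,\|A^{\prime}+A^{\prime\prime}\|\,=\,\|A\|\,\leq\,\|A^{\prime}\|+\|A^{\prime\prime}\|,
\]
which is precisely $|b_{e^{\prime}}-b_{e^{\prime\prime}}|\leq b_e\leq b_{e^{\prime}}+b_{e^{\prime\prime}}$. No serious obstacle is expected; the only point to treat carefully is the book-keeping in the case analysis identifying $I_e$ with the tree-theoretic decomposition of $\{0,\dots,n-1\}$ at $v$, which follows directly from Definition\,\ref{defi4} by choosing the base point $a\in e$ sufficiently close to $v$.
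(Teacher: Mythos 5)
Your proof is correct and takes essentially the same approach as the paper: part (2) is word-for-word the paper's argument, and part (1) is the same partition-plus-triangle-inequality idea applied to the closure constraint $u_0+\cdots+u_{n-1}=0$. Your bookkeeping in (1) — defining $J,J',J''$ via the three components of $T$ minus the open star of $v$, independently of which component holds the root — is in fact slightly tidier than the paper's direct assertion $I_e\sqcup I'_{e'}\sqcup I'_{e''}=\{0,\dots,n-1\}$, which tacitly assumes the root lies on the far side of $e$ from $v$.
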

\begin{proof}
(1) Let $[\bm{u}] \in\M(\bm{r})$. Since $I_e\sqcup I^{\prime} _{e^{\prime}}\sqcup I^{\prime} _{e^{\prime\prime}}=\{0,\dots,n-1\}$, we have 
	\[
	  \sum_{i\,\in\, I_e} u_i + \sum_{i\,\in\, I^{\prime} _{e^{\prime}}} u_i + \sum_{i\,\in\, I^{\prime} _{e^{\prime\prime}}} u_i\,=\, u_0+\cdots + u_{n-1}\,=\,0
	\]
	and hence 
	\[\bigl|b_{e^{\prime}}[\bm{u}]-b_{e^{\prime\prime}}[\bm{u}]\bigr|\leq b_e [\bm{u}]\leq b_{e^{\prime}}[\bm{u}]+b_{e^{\prime\prime}}[\bm{u}],\] which proves the assertion. 
	
	\noindent
	(2) Let $[\bm{u}] \in\M(\bm{r})$. If $e$ is adjacent to the $i$-th external vertex, we have $I_e=\{i\}$ or $I^{\prime} _e=\{i\}$ 
	and hence
	\[
	  b_e [\bm{u}]
	  \,=\, \| u_i\|
	  \,=\, r_i,
	\]
	which proves the assertion. 
	\qed
\end{proof}

We are ready to define the {\it bending system associated to a trivalent rooted ribbon tree}. Note that the number of internal edges of a trivalent rooted ribbon $(n-1)$-tree is equal to $n-3$.

\begin{defi}[Kapovich and Millson\,\cite{kapo}]\label{defi4.1}
	Suppose $n\geq 4$ and fix a numbering $\lambda:\{1,\dots,n-3\}\to\Ei(T)$. Then the {\it bending system on $\M(\bm{r})$ associated to $T$} is the collection of the $n-3$ bending Hamiltonians 
	\[
  \pi^{\bm{r}} _{T}=(b_{\lambda(1)},\dots,b_{\lambda(n-3)}):\M(\bm{r})\longrightarrow \R^{n-3}. 
\]
\end{defi}

\begin{rem}\label{rem4.4}
	When $T$ is the $(n-1)$-caterpillar (see Example\,\ref{ex2.6}) and $\lambda$ is given in order of closeness to the root, then the bending system $\pi^{\bm{r}} _{T}$ coincides with the original bending system\,(\ref{eq00}). 
	\end{rem}

The next theorem is a fundamental result on the bending system. 
\begin{theo}[Kapovich and Millson\,\cite{kapo}]\label{thm4.2}
	Suppose that $\bm{r}\in\R^n _{>0}$ satisfies the condition\,$(\ref{eq1})$. 
	Then the bending system $\pi^{\bm{r}} _T:\M(\bm{r})\to\R^{n-3}$ is a completely integrable system on an open dense subset $\M^{\prime} (\bm{r})$ of $\M(\bm{r})$ where it is smooth. Moreover, the Hamiltonian flows generated by the bending Hamiltonians induce a $(n-3)$-dimensional torus action on $\M^{\prime} (\bm{r})$ and then, the moment map is given by the restriction of $\pi^{\bm{r}} _T$ to $\M^{\prime} (\bm{r})$.
\end{theo}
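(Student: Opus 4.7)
The plan is to verify, in order: smoothness of each bending Hamiltonian $b_e$ on an appropriate open subset, that each $b_e$ generates a Hamiltonian circle action, Poisson commutativity among bending Hamiltonians indexed by distinct internal edges, and finally functional independence together with a dimension count. Since the Euclidean norm on $\R^3$ is smooth away from the origin, each $b_e$ is smooth on the open subset $U_e \subset \M(\bm r)$ where the diagonal vector $d_e := \sum_{i \in I_e} u_i$ is nonzero. I would then take $\M^{\prime}(\bm r) := \bigcap_{e \in \Ei(T)} U_e$; density follows from the non-degeneracy condition $(\ref{eq1})$, which prevents any diagonal from vanishing identically on $\M(\bm r)$.

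Next I would identify the Hamiltonian flow of $b_e$ geometrically. Using the grafting decomposition of $T$ along $e$, a polygon class splits into two sub-polygons meeting at the diagonal $d_e$. A direct computation — cleaner to carry out upstairs on $S^2(r_0)\times\cdots\times S^2(r_{n-1})$ and then descended to $\M(\bm r)$ via symplectic reduction — shows that the flow of $b_e$ fixes the sub-polygon indexed by $I_e$ and rotates the sub-polygon indexed by $I_e^{\prime}$ about the axis $d_e/\|d_e\|$ at unit angular speed. In particular, this flow is $2\pi$-periodic, so it descends to a Hamiltonian $S^1$-action on $\M^{\prime}(\bm r)$ with momentum map $b_e$.

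The heart of the argument is the Poisson commutativity $\{b_e, b_f\} = 0$ for distinct $e, f \in \Ei(T)$. Removing both edges disconnects $T$ into three subtrees, partitioning $\{0,\ldots,n-1\}$ into three blocks, and one distinguishes the nested case (where the decomposition by $f$ refines that by $e$) from the disjoint case. Using the geometric description above, in either case the $b_f$-flow either leaves the vector $d_e$ pointwise invariant or permutes only vectors inside a single block of the $e$-decomposition, so $b_e = \|d_e\|$ is preserved. Finally, the trivalent tree $T$ with $n-1$ leaves has exactly $n-3$ internal edges, matching the half-dimension $n-3$ of $\M(\bm r)$; a generic-point computation — for example at a convex planar configuration, where independent small bending variations can be exhibited explicitly — shows that the differentials $db_{e_1}, \ldots, db_{e_{n-3}}$ are linearly independent on a dense open subset, completing the integrability claim and assembling the commuting circle actions into the required effective $(n-3)$-torus action. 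The main obstacle is the Poisson-commutativity step, where the combinatorics of edge separation in the tree must be translated carefully into the compatibility of the two rotational flows on the polygon side.
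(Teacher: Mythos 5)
The paper does not prove Theorem~\ref{thm4.2} at all: it is stated as a citation to Kapovich and Millson~\cite{kapo}, and the authors of the present paper never reproduce or reprove it. So there is no ``paper's own proof'' to compare against; your sketch is being measured against the original Kapovich--Millson argument, which it does in fact track closely. The four-step plan (smoothness off the degenerate locus, $2\pi$-periodic bending flow rotating one side of the tree-diagonal about the diagonal axis, Poisson commutativity via the three-block decomposition of $T$ by a pair of internal edges, and generic functional independence by a dimension count) is precisely the structure of the proof in~\cite{kapo}, and your description of the bending flow and the nested/disjoint case analysis for $\{b_e, b_f\}=0$ is correct in substance.

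Two small points you should tighten if you were to flesh this out. First, the density of $\M'(\bm r)$ does not ``follow from the non-degeneracy condition~(\ref{eq1})'' in any direct way; condition~(\ref{eq1}) guarantees smoothness of $\M(\bm r)$, but density of the regular locus needs the separate observation that each vanishing set $\{b_e = 0\}$ is a proper real-analytic subvariety of the connected manifold $\M(\bm r)$ (equivalently, that no diagonal is forced to vanish identically once $\dim \M(\bm r) > 0$), hence has empty interior. Second, in the nested case $I_e' \subset I_f'$ the $b_f$-flow does move the vectors making up $d_e$, so the reason $b_e$ is preserved is not that those vectors stay in one block but that the flow rotates them \emph{rigidly}, hence preserves $\|d_e\|$; your phrase ``permutes only vectors inside a single block'' blurs this distinction. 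Neither issue is fatal, but both are exactly the places where a careless write-up would slip.
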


In the sense of Theorem\,$\ref{thm4.2}$, the polygon space with the bending system can be considered as an ``almost'' toric manifold. 

Recall that we consider the dimension of the quantum Hilbert space $\mathscr{H}_{\mathrm{re}}$ via the real polarization as the number of lattice points in the closure of the moment polytope: $\#\,\mathrm{Im} (\pi^{\bm{r}} _T) \cap \Z^{n-3}$.  
In the rest of this subsection, we will rewrite the number $\#\,\mathrm{Im} (\pi^{\bm{r}} _T) \cap \Z^{n-3}$ as $\#\D(T,\bm{r})$, the number of integral edge-labelings of $T$ given in Definition\,\ref{defi4.6} below. Here, we consider any $n$-tuple $\bm{r}=(r_0,\dots,r_{n-1})$ of {\it non-negative} integers for the following reason: when we prove that $\fr$ given in Definition\,\ref{def4.10} becomes a morphism of operads in Proposition\,\ref{prop4.16}, we need to consider grafting of trees with non-negative integral labelings (see Lemma\,\ref{lem4.5}). In terms of triangulations, this corresponds to consider ``gluing'' of triangulated polygons with non-negative edge-lengths. Thanks to including the case of zero-labeling, we can include triangulated polygons after contracting some diagonals in the argument as well. 

Here is the definition of integral edge-labelings of trees mentioned above.
\begin{defi}\label{defi4.6}
	Let $n\geq 2$, $\bm{r}=(r_0,\dots,r_{n-1})\in\Z^n _{\geq 0}$, and let $T$ be a trivalent rooted ribbon $(n-1)$-tree. 
	An {\it admissible integral labeling of $T$ relative to $\bm{r}$} is a function $\varphi: E(T)\to\Z_{\geq 0}$ with the following property:
	
	\noindent
	$(1)$ $\bigl|\varphi(e^{\prime})-\varphi(e^{\prime\prime})\bigr| \leq \varphi(e) \leq \varphi(e^{\prime})+ \varphi(e^{\prime\prime})$ if $e$, $e^{\prime}$ and $e^{\prime\prime}$ are adjacent to a common vertex. 
	  
\noindent
$(2)$ $\varphi(e)=r_i$ if $e$ is adjacent to the $i$-th external vertex.
	
	\noindent
	We denote by $\D(T,\bm{r})$ the set of admissible integral labelings. 
\end{defi}

\begin{rem}
 Our integer labelings of a trivalent rooted ribbon tree are some modifications of those of a pants decomposition of a compact Riemann surface with boundary given by Jeffrey and Weitsman\,\cite[Definition 4.8, 4.9]{JW2}. The next proposition is an analog of Theorem\,4.10(b) in \cite{JW2}.
\end{rem}

\begin{prop}\label{prop4.10}
	Suppose that $n\geq 4$ and $\bm{r}\in\Z_{> 0} ^n$ satisfies $\M(\bm{r})\neq \emptyset$. Then we have $\#\D(T,\bm{r})=\#\,\mathrm{Im} (\pi^{\bm{r}} _T) \cap \Z^{n-3}$. 
\end{prop}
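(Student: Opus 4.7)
The plan is to construct an explicit bijection between $\D(T, \bm{r})$ and $\mathrm{Im}(\pi^{\bm{r}}_T) \cap \Z^{n-3}$ by restricting each admissible labeling to the internal edges of $T$. Concretely, I would consider
\[
\Psi : \D(T, \bm{r}) \longrightarrow \Z_{\geq 0}^{n-3}, \qquad \Psi(\varphi) = \bigl(\varphi(\lambda(1)), \dots, \varphi(\lambda(n-3))\bigr),
\]
and show that $\Psi$ is injective with image $\mathrm{Im}(\pi^{\bm{r}}_T) \cap \Z^{n-3}$. Injectivity is immediate from condition (2) of Definition\,\ref{defi4.6}: the values of $\varphi$ on external edges are forced to coincide with the corresponding entries of $\bm{r}$, so $\varphi$ is recovered from $\Psi(\varphi)$ and $\bm{r}$.

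For the inclusion $\mathrm{Im}(\Psi) \supset \mathrm{Im}(\pi^{\bm{r}}_T) \cap \Z^{n-3}$, I would invoke Lemma\,\ref{lem4.2} directly. Given $[\bm{u}] \in \M(\bm{r})$ with $\pi^{\bm{r}}_T[\bm{u}] \in \Z^{n-3}$, define $\varphi : E(T) \to \Z_{\geq 0}$ by $\varphi(e) = b_e[\bm{u}]$; this takes non-negative integer values on internal edges by hypothesis and on external edges by Lemma\,\ref{lem4.2}\,(2). Moreover, Lemma\,\ref{lem4.2}\,(1) yields condition (1) of Definition\,\ref{defi4.6}, and Lemma\,\ref{lem4.2}\,(2) yields condition (2). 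Hence $\varphi \in \D(T, \bm{r})$ and $\Psi(\varphi)$ equals the given tuple.

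For the reverse inclusion $\mathrm{Im}(\Psi) \subset \mathrm{Im}(\pi^{\bm{r}}_T)$, one must realize an admissible labeling geometrically. Identifying $T$ with the dual graph of a triangulation of the $n$-gon, at each internal (trivalent) vertex of $T$ the three adjacent edge-labels of $\varphi$ satisfy the triangle inequality by condition (1) of Definition\,\ref{defi4.6}, and can therefore be realized as the side lengths of a (possibly degenerate) Euclidean triangle in a common plane $\R^2 \subset \R^3$. Gluing these triangles along the shared diagonals, inductively along the grafting decomposition of $T$ furnished by Proposition\,\ref{prop2.6}, produces vectors $u_0, \dots, u_{n-1} \in \R^3$ with $\|u_i\| = r_i$ and $u_0 + \cdots + u_{n-1} = 0$, yielding a class $[\bm{u}] \in \M(\bm{r})$ whose bending Hamiltonians satisfy $b_e[\bm{u}] = \varphi(e)$ for every edge $e \in E(T)$. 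In particular $\Psi(\varphi) = \pi^{\bm{r}}_T[\bm{u}] \in \mathrm{Im}(\pi^{\bm{r}}_T)$.

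The main obstacle is this last geometric realization step: one must handle the equality cases in the triangle inequalities (degenerate triangles, corresponding to collinear configurations) and verify that the successive gluings assemble into a globally consistent closed planar polygon rather than merely a collection of triangles. This is essentially the polytope description of $\mathrm{Im}(\pi^{\bm{r}}_T)$ already implicit in Kapovich--Millson\,\cite{kapo}, and is naturally organized as an induction on the number of internal edges of $T$ via the grafting decomposition of Proposition\,\ref{prop2.6}, the base case being the $2$-corolla (a single triangle) where the statement is the classical triangle inequality.
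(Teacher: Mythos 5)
Your proposal follows essentially the same route as the paper, which factors the argument through Lemmas\,\ref{lemm4.4} and \ref{lem4.12}: restriction to the internal edges gives the injection, and the geometric realization step ($\D\subset B$) is proved in Lemma\,\ref{lem4.12} by exactly the induction you describe, on the grafting decomposition of Proposition\,\ref{prop2.6}, with the triangle inequality as the base case and the inductive gluing performed in $\R^3$ by an $SO(3)$-rotation. The degeneracies you flag are handled in the paper by passing to the generalized polygon space $\M(d;\bm{c})$ with non-negative lengths (Definition\,\ref{defi4.15}, Lemmas\,\ref{lem4.15} and \ref{lem4.22}); note also that the glued polygon need not be planar, so the ``globally consistent closed planar polygon'' concern you raise is not actually an obstacle.
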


We will prove this proposition in the next section.

\subsection{The morphism of operads associated to the real polarization}

Before we prove Proposition\,\ref{prop4.10}, we first define the morphism $\fr$ in Definition\,\ref{def4.10} by using the set of admissible integral labelings introduced in Definition\,\ref{defi4.6}, and prove that it is indeed a morphism of operads. After that, we prove Proposition\,\ref{prop4.10} so that the morphism $\fr$ describes the number of lattice points. 

First of all, we note the following proposition which will be proved later. 
\begin{prop}\label{prop4.6}
	Let $T$ be a trivalent rooted ribbon $n$-tree and let $(d;\bm{c})\in \Z_{\geq 0}\times \Z^n _{\geq 0}=\Z^{n+1} _{\geq 0}$. Then the followings hold.
	
	\noindent
	$(1)$ $\D(T,\, d;\bm{c})=\emptyset$ \ if $d>|\bm{c}|=c_1+\cdots+c_n$.
	
	\noindent
	$(2)$ $\#\D(T,\,d;\bm{c})<\infty.$
\end{prop}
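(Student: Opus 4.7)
My plan is to prove both (1) and (2) simultaneously by induction on the number of leaves $n$ of $T$, with the inductive step driven by the grafting decomposition in Proposition\,\ref{prop2.6}.

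For the base cases I would handle $n=1$ and $n=2$ directly. When $n=1$, $T$ is the exceptional tree and its single edge is forced by admissibility to carry both values $d$ and $c_1$, so $\D(T,(d;c_1))$ is empty unless $d=c_1$, in which case it is a singleton. When $n=2$, $T$ is the $2$-corolla, and admissibility forces its three external edges to carry $d,c_1,c_2$ subject only to the triangle inequality $|c_1-c_2|\le d\le c_1+c_2$ at the unique internal vertex; again the set has at most one element, and is empty whenever $d>c_1+c_2=|\bm{c}|$. Both assertions hold trivially in these cases.

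For the inductive step with $n\ge 3$, I would first observe that a trivalent rooted ribbon tree with at least three leaves always has an internal edge, so Proposition\,\ref{prop2.6} lets us write $T\cong S\circ_j S'$ with $S,S'$ trivalent having $m$ and $l=n-m+1$ leaves respectively, where $2\le m,l<n$. The main step is then to establish, by unpacking the definitions of grafting and admissibility, the bijection
\[
  \D\bigl(T,(d;\bm{c})\bigr)\,\cong\,\bigsqcup_{k\in\Z_{\ge 0}}\D\bigl(S,(d;\bm{c}^{j,l;k})\bigr)\times \D\bigl(S',(k;\bm{c}_{j,l})\bigr),
\]
where $k$ records the common value of a labeling on the internal edge of $T$ produced by the grafting (viewed simultaneously as the $j$-th leaf edge of $S$ and the root edge of $S'$), and the notations $\bm{c}^{j,l;k}$ and $\bm{c}_{j,l}$ are as in Definition\,\ref{def1.10}.

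Granted this decomposition, both assertions follow quickly. For (1) I would argue by contradiction: if $d>|\bm{c}|$ but some summand is nonempty for some $k$, then the induction hypothesis (1) applied to $S'$ gives $k\le c_j+\cdots+c_{j+l-1}$, while applied to $S$ it gives $d\le c_1+\cdots+c_{j-1}+k+c_{j+l}+\cdots+c_n$; adding these contradicts $d>|\bm{c}|$. For (2) I would note that by (1) applied inductively to $S'$, only finitely many values of $k$ contribute to the disjoint union (namely $0\le k\le c_j+\cdots+c_{j+l-1}$), and each surviving summand is finite by the induction hypothesis (2). I do not anticipate any substantive obstacle; the only step that demands some care is verifying the bijection displayed above, which just requires tracking how edges and triangle-inequality conditions of $T$ correspond to those of $S$ and $S'$ under the grafting operation.
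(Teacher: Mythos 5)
Your proposal is correct and follows essentially the same route as the paper: both use induction on the number of leaves, with base cases $n=1,2$ handled directly (the paper isolates these as Lemma\,\ref{lem4.4}), and the inductive step driven by the grafting decomposition of Proposition\,\ref{prop2.6} together with the bijection of Lemma\,\ref{lem4.5} that splits $\D(T\circ_i S,d;\bm{c})$ into a disjoint union indexed by the value $k$ on the new internal edge. For part (1) the paper likewise uses the induction hypothesis on both factors to bound $d\leq |\bm{c}^{i,m;k}|$ and $k\leq |\bm{c}_{i,m}|$ and substitutes, and for part (2) it uses part (1) applied to $S'$ to truncate the sum over $k$ before invoking finiteness of each term by induction; your proposal does exactly this.
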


Proposition\,$\ref{prop4.6}$ allows us to make the next definition.
\begin{defi}\label{def4.10}
	We define a sequence $\fr=\{{(\fr)}_n : \Tt(n)\to \W(\Z_{\geq 0})(n)\}_{n\geq 1}$ of maps by
	\[
	  {(\fr)}_{n}[T]\,=\,\Bigl((d ; \bm{c})\longmapsto \#\D(T,\,d;\bm{c})\Bigr).
	\]
\end{defi}

Then we show the following proposition.
\begin{prop}\label{prop4.16}
	The sequence $\fr$ is a morphism of operads from $\Tt$ to $\W(\Z_{\geq 0})$.
\end{prop}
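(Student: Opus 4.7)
The plan is to verify the two axioms of Definition 2.12. The unit axiom is straightforward: the unique element of $\Tt(1)$ is the exceptional tree, whose single edge $e$ has both endpoints external; by property (2) of Definition 4.6, an admissible labeling for data $(d;c)$ exists iff $\varphi(e)=d=c$, and is then unique. Hence ${(\fr)}_1(\unit)(d;c)=1$ if $d=c$ and $0$ otherwise, matching the unit of $\W(\Z_{\geq 0})$ from Proposition 2.9. Well-definedness of ${(\fr)}_n[T]$ as an element of $\W(\Z_{\geq 0})(n)$ follows from Proposition 4.6: part (2) gives finiteness of the values, while part (1) says ${(\fr)}_n[T](d;\bm c)=0$ whenever $d>|\bm c|$, so the support condition $N({(\fr)}_n[T],\bm c)<\infty$ holds.

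The substantive content is the composition axiom, which asks for
\[
\#\D(T\circ_i S,\,d;\bm c)\,=\,\sum_{k\in\Z_{\geq 0}}\,\#\D\bigl(T,\,d;\bm c^{i,m;k}\bigr)\cdot\#\D\bigl(S,\,k;\bm c_{i,m}\bigr)
\]
for every $[T]\in\Tt(n)$, $[S]\in\Tt(m)$, $1\leq i\leq n$, $d\in\Z_{\geq 0}$, $\bm c\in\Z_{\geq 0}^{n+m-1}$. The approach is to construct an explicit bijection
\[
\Psi:\D(T\circ_i S,\,d;\bm c)\,\longrightarrow\,\bigsqcup_{k\in\Z_{\geq 0}}\,\D\bigl(T,\,d;\bm c^{i,m;k}\bigr)\times\D\bigl(S,\,k;\bm c_{i,m}\bigr)
\]
by restriction along the two subcomplexes $T$ and $S$ of $T\circ_i S$. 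In the grafting, the edge at the $i$-th leaf of $T$ and the edge at the root of $S$ become identified as a single internal edge $e_{\ast}$ of $T\circ_i S$ (after collapsing the former valence-one vertices $(v_T)_i=v_S$). For $\varphi\in\D(T\circ_i S,d;\bm c)$, set $k:=\varphi(e_{\ast})$ and let $\varphi_T,\varphi_S$ denote the restrictions of $\varphi$ to $E(T),E(S)$, each assigning the common value $k$ to the edge corresponding to $e_\ast$. The inverse of $\Psi$ glues any pair $(\varphi_T,\varphi_S)$ whose values on $e_\ast$ agree back into a single labeling of $T\circ_i S$.

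Admissibility transfers in both directions by a purely local check at each vertex. Property (2) of Definition 4.6 is direct bookkeeping on external edges: $\varphi_T$ carries the root label $d$, reads off $k$ on the $i$-th leaf edge (namely $e_\ast$), and produces leaf labels $\bm c^{i,m;k}$; symmetrically $\varphi_S$ has root label $k$ and leaf labels $\bm c_{i,m}$. For property (1), every trivalent internal vertex of $T$ or of $S$ retains its three adjacent edges inside $T\circ_i S$, with $e_\ast$ replacing, respectively, the $i$-th leaf edge of $T$ at its internal endpoint and the root edge of $S$ at its internal endpoint. Hence the triple inequalities for $\varphi$ split precisely into the triple inequalities for $\varphi_T$ and for $\varphi_S$, so $\Psi$ and its inverse are well-defined mutual inverses. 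The finiteness of the sum over $k$ is automatic from Proposition 4.6(1) applied to $S$: only $k\leq c_i+\cdots+c_{i+m-1}$ yields a nonempty summand. The main obstacle I expect is this local accounting around $e_{\ast}$ and its two internal endpoints, one in each of $T$ and $S$; once the geometric picture of Definition 1.5 is pinned down, the argument reduces to a careful application of the definitions.
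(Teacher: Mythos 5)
Your proposal is correct and takes essentially the same approach as the paper: the bijection you build by restricting a labeling of $T\circ_i S$ to the subcomplexes $T$ and $S$ (reading off $k=\varphi(e_*)$ on the newly created internal edge) is exactly the paper's Lemma 4.5, and your computation for the exceptional tree is the paper's Lemma 4.4(1). The paper simply factors these two facts out as separate lemmas and then cites them, whereas you inline the arguments; otherwise the reasoning, including the use of Proposition 4.6 for well-definedness and finiteness of the sum, matches.
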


To prove Propositions\,\ref{prop4.6} and \ref{prop4.16}, we prepare a couple of lemmas.

\begin{lemm}\label{lem4.4}
	Let $T$ be a trivalent rooted ribbon $n$-tree. 
	
	\noindent
	$(1)$ If $n=1$, we have for $(d;c)\in \Z_{\geq 0}\times \Z_{\geq 0}$
	\[
	  \#\D(T,\,d;c)\,=\, \left\{
\begin{array}{ll}
1 & \text{if \,} d=c,\\
0 & \text{otherwise}.
\end{array}
\right. 
	\]
	
	\noindent
	$(2)$ If $n=2$, we have for $(d;c_1, c_2)\in \Z_{\geq 0}\times \Z^2 _{\geq 0}$
	\[
	  \#\D(T,\,d;c_1, c_2)\,=\, \left\{
\begin{array}{ll}
1 & \text{if \,} |c_1 - c_2|\leq d\leq c_1 + c_2,\\
0 & \text{otherwise}.
\end{array}
\right. 
	\]
\end{lemm}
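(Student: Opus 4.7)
The plan is to unwind Definition \ref{defi4.6} directly. In both cases $n=1$ and $n=2$, the tree $T$ is small enough that every edge is adjacent to at least one external vertex, so condition (2) of Definition \ref{defi4.6} forces the values of $\varphi$ on every edge. The only remaining work is to decide when those forced values are mutually consistent with condition (2) itself (if several external vertices share an edge) and with the triangle inequalities in condition (1).

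For part (1), the exceptional tree consists of a single edge $e$ whose two endpoints are both external, namely the root vertex $v_T$ and the unique leaf vertex $(v_T)_1$. Hence condition (2) demands simultaneously $\varphi(e) = r_0 = d$ and $\varphi(e) = r_1 = c$. Thus $\D(T,d;c)$ is nonempty iff $d=c$, in which case the labeling $\varphi(e)=d$ is unique; condition (1) is vacuous since $T$ has no internal vertex. This gives the claimed dichotomy.

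For part (2), the $2$-corolla has exactly one internal vertex $v$ with three adjacent edges: the root edge $e_0$ and the two leaf edges $e_1, e_2$. By condition (2), $\varphi(e_0)=d$, $\varphi(e_1)=c_1$, $\varphi(e_2)=c_2$. Condition (1) applied at $v$ with the triples $(e_0,e_1,e_2)$, $(e_1,e_0,e_2)$, $(e_2,e_0,e_1)$ yields three inequalities, but all of them are equivalent to the triangle inequality $|c_1-c_2|\leq d\leq c_1+c_2$. Hence the uniquely determined labeling is admissible exactly in that range, yielding $\#\D(T,d;c_1,c_2)\in\{0,1\}$ as stated.

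There is no serious obstacle: both cases collapse to a straightforward inspection once one records that the trees in question have no nontrivial combinatorial freedom (the 1-corolla and the 2-corolla are unique up to isomorphism by Example \ref{ex1.2}). The only mildly subtle point worth emphasizing in the write-up is that in case (1) the constraint $d=c$ comes from condition (2) alone, not from the triangle inequality, because \emph{both} endpoints of the single edge are external vertices.
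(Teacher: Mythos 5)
Your proof is correct and takes essentially the same approach as the paper, which simply observes that $T$ must be the exceptional tree or the $2$-corolla and cites Definition \ref{defi4.6}; you have just written out the details that the paper leaves implicit (the forced values from condition (2), the vacuity of condition (1) in case $n=1$, and the equivalence of the three triangle inequalities in case $n=2$).
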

\begin{proof}
	If $n=1$ (resp. $n=2$), then $T$ is nothing but the exceptional tree (resp. the $2$-corolla) in Example\,\ref{ex1.2}. Therefore, the assertions follow from Definition\,$\ref{defi4.6}$.
	\qed
\end{proof}
\begin{lemm}\label{lem4.5} 
Let $T$ and $S$ be trivalent rooted ribbon trees, let $n$ (resp. $m$) be the number of leaves of $T$ (resp. $S$), and let $1\leq i\leq n$. Then, the set $\D\bigl(T\circ_i S,\,d;\bm{c}\bigr)$ is in bijective correspondence with
\[\bigsqcup_{k\in \Z_{\geq 0}} \D\bigl(T,\,d;\bm{c}^{i,m;k}\bigr)\times \D\bigl(S,\,k;\bm{c}_{i,m}\bigr)
	\]
	for each $(d;\bm{c})\in \Z_{\geq 0}\times \Z^{n+m-1} _{\geq 0}$. Recall that $\bm{c}^{i,m;k}$ (resp. $\bm{c}_{i,m}$) is the $n$-tuple (resp. the $m$-tuple) of non-negative integers from the notations\,(\ref{eq1.101}) and (\ref{eq1.102}). 
\end{lemm}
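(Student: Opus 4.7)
The plan is to exhibit a bijection in which the summation index $k\in\Z_{\geq 0}$ on the right-hand side is the label assigned to the new internal edge of $T\circ_i S$. The structural observation that drives everything is the following: gluing the valence-$1$ vertices $(v_T)_i$ and $v_S$ in Definition~\ref{def1.5} creates a valence-$2$ vertex, which upon smoothing fuses the $i$-th leaf edge of $T$ with the root edge of $S$ into a single edge $e_{\ast}\in E(T\circ_i S)$. Combined with the subcomplex inclusions of Definition~\ref{def1.5}, this gives
\[
E(T\circ_i S)\,=\,E(T)\cup E(S),\qquad E(T)\cap E(S)\,=\,\{e_{\ast}\},
\]
and the internal vertices of $T\circ_i S$ decompose disjointly into the internal vertices of $T$ and those of $S$. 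Moreover, by Definition~\ref{defi2.4} applied to $T\circ_i S$, the leaves $1,\dots,i-1$ and $i+m,\dots,n+m-1$ of $T\circ_i S$ are precisely the leaves $1,\dots,i-1$ and $i+1,\dots,n$ of $T$, while the leaves $i,\dots,i+m-1$ are the leaves $1,\dots,m$ of $S$.

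With this structural setup in hand, I would define the forward map by $\varphi\mapsto (k,\,\varphi|_{E(T)},\,\varphi|_{E(S)})$, where $k:=\varphi(e_{\ast})$. The leaf-numbering split above forces the boundary data of $\varphi|_{E(T)}$ to be $(d;\bm{c}^{i,m;k})$ and that of $\varphi|_{E(S)}$ to be $(k;\bm{c}_{i,m})$, so the only remaining check is condition~(1) of Definition~\ref{defi4.6} for the two restrictions. This is automatic because every internal vertex of $T\circ_i S$ lies in either $T$ or $S$, and the three edges adjacent to such a vertex lie entirely in $E(T)$ or entirely in $E(S)$ (possibly including $e_{\ast}$ as one of them). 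The inverse glues: given $(\varphi_T,\varphi_S)\in \D(T,d;\bm{c}^{i,m;k})\times \D(S,k;\bm{c}_{i,m})$, set $\varphi:=\varphi_T$ on $E(T)$ and $\varphi:=\varphi_S$ on $E(S)$, which is well defined on $\{e_{\ast}\}$ because $\varphi_T$ assigns it value $k$ (by the $i$-th leaf condition) and $\varphi_S$ assigns it value $k$ (by the root condition). Boundary data and the triangle inequality for $\varphi$ then decompose vertex-by-vertex and reduce to those for $\varphi_T$ and $\varphi_S$. The union over $k$ is disjoint since distinct values of $k$ yield distinct values $\varphi(e_{\ast})$.

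The main obstacle, and really the only nontrivial step, is the structural bookkeeping: pinning down $e_{\ast}$ as the unique shared edge between $E(T)$ and $E(S)$ inside $E(T\circ_i S)$ and verifying that no ``new'' internal vertex is introduced by the grafting. Once that is laid out, both the boundary-data matching and the descent of condition~(1) to $T$ and $S$ are immediate, and the bijection follows at once.
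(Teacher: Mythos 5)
Your proof is correct and takes essentially the same approach as the paper: the bijection sends $\varphi$ to the pair of restrictions $(\varphi|_{E(T)},\varphi|_{E(S)})$ under the subcomplex identification of Definition~\ref{def1.5}, with $k=\varphi$ evaluated on the new internal edge $(-(r_T)_i)\sqcup(-r_S)$. The paper's proof is a one-line sketch of this same bijection; your version simply makes explicit the structural bookkeeping (edge/vertex decomposition, leaf renumbering, well-definedness of the inverse) that the paper leaves implicit.
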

\begin{proof}
	The bijective correspondence is given by
	\[
	  \varphi\longmapsto (\varphi|_{E(T)},\,\varphi|_{E(S)}),
	\]
	where we identify $T$ and $S$ with subcomplexes of $T\circ_i S$ as in Definition\,$\ref{def1.5}$. Indeed, by setting a number $k$ as the value of $\varphi$ at the internal edge $(-(r_T)_i)\sqcup (-r_S)\in \Ei(T\circ_i S)$, we have
	\[\varphi|_{E(T)}\in \D \bigl(T,\,d;\bm{c}^{i,m;k}\bigr)\text{ \ and \ }\varphi|_{E(S)} \in \D \bigl(S,\,k;\bm{c}_{i,m}\bigr).\]
	\qed
\end{proof}

Now, using the two lemmas above, we prove Propositions\,\ref{prop4.6} and \ref{prop4.16}.
\begin{shomeio}
	We prove both (1-2) by induction on $n$ respectively. The cases $n=1,2$ of (1-2) follow from Lemma\,$\ref{lem4.4}$. We assume that $n\geq 3$. Then, since $T$ always has an internal edge, there exist $m,m^{\prime}\geq 2$, a ribbon $m$-tree $S$, a ribbon $m^{\prime}$-tree $S^{\prime}$, and $1\leq i\leq m$ such that $T$ is isomorphic to $S\circ_i S^{\prime}$ by Proposition\,$\ref{prop2.6}$.
	
	\noindent
	(1) We assume that there exists an admissible integral labeling $\varphi$ of $T=S\circ_i S^{\prime}$ relative to $(d;\bm{c})$. 
	As in the proof of Lemma\,\ref{lem4.5}, we find that
	\[\varphi|_{E(S)}\in \D \bigl(S,\,d;\bm{c}^{i,m;k}\bigr)\text{ \ and \ }\varphi|_{E(S^{\prime})} \in \D \bigl(S^{\prime},\,k;\bm{c}_{i,m}\bigr)\] 
	for some $k\in \Z_{\geq 0}$ and hence, we have
	\[
	 d\leq |\bm{c}^{i,m,k}|\text{ \ and \ }k\leq |\bm{c}_{i,m}|
	\]
	by the induction hypothesis. Therefore we obtain 
	\[
	  d\leq |\bm{c}^{i,m,k}| =c_1+\dots+c_{i-1}+k+c_{i+m}+\dots+c_{n+m-1}\leq |\bm{c}|
	\] as desired.
	
	\noindent
	(2) By Lemma\,$\ref{lem4.5}$ and Proposition\,$\ref{prop4.6}$(1), we have
	\[
	 \#\D\bigl(S\circ_i S^{\prime},\,d;\bm{c}\bigr)=\sum_{k=0}^{|\bm{c}_{i,m}|}\#\D \bigl(S,\,d;\bm{c}^{i,m,k}\bigr)\cdot\#\D \bigl(S^{\prime},\,k;\bm{c}_{i,m}\bigr).
	\]
	Hence, we obtain $\#\D\bigl(S\circ_i S^{\prime},\,d;\bm{c}\bigr)<\infty$ by the induction hypothesis.
	\qed
\end{shomeio}

\begin{shomeiu}
	Let $T$ and $S$ be rooted ribbon trees, let $n$ (resp. $m$) be the number of leaves of $T$ (resp. $S$), let $1\leq i\leq n$, and let $(d;\bm{c})\in \Z_{\geq 0}\times \Z^{n+m-1} _{\geq 0}$. Then we have
	\begin{align*}
	  \bigl(\,{(\fr)}_{n+m-1}\,[T]\circ_i[S]\,\bigr)(d;\bm{c}) &\,=\, \#\D\bigl(T\circ_i S,\,d;\bm{c}\bigr)\\
	  &\,=\, \sum _{k\in \Z_{\geq 0}} \#\D \bigl(T,\,d;\bm{c}^{i,m;k}\bigr)\cdot\#\D \bigl(S,\,k;\bm{c}_{i,m}\bigr)\\
	  &\,=\, \sum _{k\in \Z_{\geq 0}} \bigl({(\fr)}_n [T]\,\bigr) (d;\bm{c}^{i,m;k})\cdot \bigl({(\fr)}_m [S]\,\bigr) (k;\bm{c}_{i,m})\\
	  &\,=\,\bigl(\,{(\fr)}_n [T]\, \circ_i \,{(\fr)}_m [S]\, \bigr)(d;\bm{c}),
	\end{align*}
	where the second equality is due to Lemma\,$\ref{lem4.5}$. 
	
	On the other hand, we obtain ${(\fr)}_1 \unit=\unit$ from Lemma\,$\ref{lem4.4}.(1)$. (Recall that the unit of $\Tt$ is given by the tree with one leaf and the unit of $\W(\Z_{\geq 0})$ is given in Proposition\,\ref{prop2.9}.) Now the proposition is proved.
	\qed
\end{shomeiu}

We show that the morphism $\fr:\Tt\to\W(\Z_{\geq 0})$ controls the number of the lattice points associated to the bending system.
\begin{prop}\label{prop4.19}
	Suppose $n\geq 4$. We have the following for any trivalent rooted ribbon $(n-1)$-tree $T$ and any $n$-tuple $\bm{r}=(r_0,r_1,\dots, r_{n-1})$ of positive integers satisfying $\M(\bm{r})\neq\emptyset$: 
	\[
	  \bigl(\,{(\fr)}_{n-1} [T]\,\bigr) (r_0;r_1,\dots, r_{n-1})\,=\,\#\,\mathrm{Im} (\pi^{\bm{r}} _T) \cap \Z^{n-3}.
	\]
\end{prop}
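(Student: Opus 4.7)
The statement is essentially a translation between two pieces of notation already established, so the plan is short: first I would unwind Definition \ref{def4.10}, which gives $\bigl({(\fr)}_{n-1}[T]\bigr)(r_0; r_1, \ldots, r_{n-1}) = \#\D(T, r_0; r_1, \ldots, r_{n-1})$. Then I would observe that, since a trivalent rooted ribbon $(n-1)$-tree $T$ has exactly $n$ external edges (one root-adjacent and $n-1$ leaf-adjacent), this count coincides with $\#\D(T, \bm{r})$ in the sense of Definition \ref{defi4.6} applied to $\bm{r}=(r_0, r_1, \ldots, r_{n-1})$: condition $(2)$ of Definition \ref{defi4.6} requires $\varphi(e)=r_i$ on the external edge adjacent to the $i$-th external vertex for every $i=0,\dots,n-1$, which is exactly the role of the ``root slot'' $d=r_0$ together with the ``leaf slots'' $\bm{c}=(r_1,\dots,r_{n-1})$ in Definition \ref{def4.10}. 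Finally, Proposition \ref{prop4.10} furnishes $\#\D(T, \bm{r}) = \#\,\mathrm{Im}(\pi^{\bm{r}}_T) \cap \Z^{n-3}$, closing the argument.

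In this sense Proposition \ref{prop4.19} is a cosmetic reformulation of Proposition \ref{prop4.10} in the operadic language of $\fr$; all substantive content lies in Proposition \ref{prop4.10}, whose proof is deferred to the next section. Where substantial work is required is in that deferred proof: one fixes the numbering $\lambda:\{1,\dots,n-3\}\to \Ei(T)$ used to define $\pi^{\bm{r}}_T$ and shows that sending an admissible integer labeling $\varphi$ to $(\varphi(\lambda(1)),\dots,\varphi(\lambda(n-3)))$ is a bijection between $\D(T,\bm{r})$ and $\mathrm{Im}(\pi^{\bm{r}}_T)\cap \Z^{n-3}$. The inclusion of this image in the set of $(k_1,\dots,k_{n-3})$ satisfying the triangle inequalities at each internal vertex is immediate from Lemma \ref{lem4.2}, while the converse uses Kapovich--Millson's description of $\mathrm{Im}(\pi^{\bm{r}}_T)$ as the convex polytope cut out by exactly these triangle inequalities (\cite{kapo}), together with the recursive construction of a polygon whose diagonals indexed by internal edges realize any prescribed admissible labeling.

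The main obstacle for Proposition \ref{prop4.19} itself is therefore just verifying the notational identification between the operadic input $(d;\bm{c})$ of $(\fr)_{n-1}[T]$ and the edge-length data $\bm{r}$ of $\M(\bm{r})$; the genuine difficulty is the polytope-vs.-labelings bijection that underlies Proposition \ref{prop4.10}, but that is a separate statement with its own deferred proof and can be taken as a black box here.
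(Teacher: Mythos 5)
Your proof is correct and matches the paper's own one-line argument: unwind Definition~\ref{def4.10} to obtain $\#\D(T,\bm{r})$ and then cite Proposition~\ref{prop4.10} for the second equality. (One minor point on your side remarks: the paper's deferred proof of Proposition~\ref{prop4.10} does not invoke Kapovich--Millson's description of the moment polytope, but instead establishes $\D(T,\bm{r})=B(T,\bm{r})$ directly by a recursive gluing construction in Lemma~\ref{lem4.12} together with the counting Lemma~\ref{lemm4.4}; since you flag that proposition as a black box anyway, this does not affect your argument.)
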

\begin{proof}
	The proposition is proved as follows:
	\[
	  \bigl(\,{(\fr)}_{n-1} [T]\,\bigr) (r_0;r_1,\dots, r_{n-1})\,=\, \#\D(T,\bm{r})\,=\,\#\,\mathrm{Im} (\pi^{\bm{r}} _T) \cap \Z^{n-3},
	\]
	where the second equality is due to Proposition\,\ref{prop4.10}. 
	\qed
\end{proof}

Now we start to prove Proposition\,\ref{prop4.10}. Hereafter, we will generalize the underlying set of the polygon space to take polygons with contracted side-edges into account. 
Let $n\geq 2$ and $(d;\bm{c})\in \Z_{\geq 0}\times \Z^n _{\geq 0}=\Z^{n+1} _{\geq 0}$. Then, we consider the following set:
\[
  \M(d;\bm{c})=\bigl\{\bm{u}=(u_0,u_1,\dots,u_n)\in S^2(d)\times S^2(c_1)\times\cdots \times S^2(c_n)\,\bigl|\, u_0 +\cdots + u_n = 0\bigr\}/SO(3),
\]
where $S^2(0)$ is the point $\{0\}\subset\R^3$ with the trivial $SO(3)$-action and the quotient is taken by the diagonal action. Note that $\M(d;\bm{c})\neq\emptyset$ if and only if
\begin{equation}
 d\leq c_1+\dots+c_n\text{ \ and \ } c_i\leq d + \sum_{j\neq i} c_j\text{ for each }i=1,\dots,n.\label{eq4.17}
\end{equation}
First, we note the following lemma. Let $T$ be a trivalent rooted ribbon $n$-tree. 

\begin{lemm}\label{lem4.15}
	We have $\M(d;\bm{c})\neq\emptyset$ if $\D(T,\,d;\bm{c})\neq\emptyset$.
\end{lemm}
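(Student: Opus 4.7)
The plan is to prove the lemma by induction on $n$, mirroring the inductive structure of the proofs of Propositions\,\ref{prop4.6} and \ref{prop4.16}. The key observation is that $\M(d;\bm{c})\neq\emptyset$ is equivalent to the inequalities\,(\ref{eq4.17}). For the base cases $n=1$ and $n=2$, Lemma\,\ref{lem4.4} forces respectively $d=c$ or $|c_1-c_2|\le d\le c_1+c_2$, each of which is precisely the condition\,(\ref{eq4.17}); thus $\M(d;\bm{c})\neq\emptyset$ follows immediately.

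For the inductive step with $n\ge 3$, since a trivalent rooted ribbon tree with more than three leaves contains an internal edge, Proposition\,\ref{prop2.6} yields a decomposition $T\cong S\circ_i S'$ with $S$ a trivalent rooted ribbon $m$-tree and $S'$ a trivalent rooted ribbon $l$-tree, where $m,l\ge 2$ and $m+l-1=n$, so in particular $m,l<n$. An admissible labeling $\varphi\in\D(T,\,d;\bm{c})$ then produces, via Lemma\,\ref{lem4.5}, some $k\in\Z_{\ge 0}$ for which both $\D(S,\,d;\bm{c}^{i,l;k})$ and $\D(S',\,k;\bm{c}_{i,l})$ are non-empty. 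The induction hypothesis therefore supplies classes $[\bm{u}]\in\M(d;\bm{c}^{i,l;k})$ and $[\bm{v}]\in\M(k;\bm{c}_{i,l})$.

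The remaining step is to splice these two sub-polygons into an element of $\M(d;\bm{c})$. The tuple $\bm{u}$ contains a distinguished vector $u^*$ of length $k$ (at position $i$), while $\bm{v}$ has leading vector $v_0$ of length $k$. When $k>0$, the diagonal $SO(3)$-action on $\bm{v}$ can be used to arrange $v_0=-u^*$; the closing relation for $\bm{v}$ then reads $u^*=v_1+\cdots+v_l$, and substituting $(v_1,\ldots,v_l)$ for $u^*$ in $\bm{u}$ yields a tuple with norms $(d;c_1,\dots,c_n)$ whose components still sum to zero, hence an element of $\M(d;\bm{c})$. The degenerate case $k=0$ forces $u^*=0$ and $v_0=0$, and the same substitution works without any rotation. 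The main subtlety is this splicing step: one must verify that the $SO(3)$-alignment of the two shared length-$k$ vectors is always realizable when $k>0$ and that the $k=0$ case fits uniformly into the same framework.
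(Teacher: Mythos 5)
Your proof is correct, but it takes a genuinely different route from the paper's. The paper's argument is very short and purely combinatorial: it observes that the definition of an admissible labeling is indifferent to which external half-edge is declared the root, so $\D(T,\,d;\bm{c})=\D(T_i,\,c_i;\dots)$ for every re-rooting $T_i$; applying Proposition\,\ref{prop4.6}(1) once for each of the $n+1$ re-rooted trees then produces exactly the $n+1$ inequalities of condition\,(\ref{eq4.17}), which characterize $\M(d;\bm{c})\neq\emptyset$. Your argument instead runs an induction on $n$ and explicitly constructs a polygon by splicing the two sub-polygons supplied by the inductive hypothesis, rotating one of them by an element of $SO(3)$ to align the shared length-$k$ vectors. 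This is valid, but it essentially duplicates the constructive argument that the paper reserves for the inclusion $\D(T,\,d;\bm{c})\subset B(T,\,d;\bm{c})$ in Lemma\,\ref{lem4.12}; in fact, had you proved Lemma\,\ref{lem4.15} this way, it would render the lemma redundant, since you could directly prove $\D(T,\,d;\bm{c})\subset B(T,\,d;\bm{c})$ by the same induction without needing $\M(d;\bm{c})\neq\emptyset$ as a separate lemma. Two small remarks: (i) the paper defines $\M(d;\bm{c})$ only for $n\geq 2$, so your base case should start at $n=2$ (and for trivalent rooted ribbon trees, $n=1$ is the exceptional tree, which has no labeling issues anyway); (ii) your phrase ``more than three leaves'' should read ``at least three leaves,'' since any trivalent rooted ribbon $n$-tree with $n\geq 3$ already has an internal edge.
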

\begin{proof}
	We assume that $\D(T,\,d;\bm{c})\neq\emptyset$. Note that
	\begin{align*}
		\D(T,\,d;\bm{c})&\,=\, \D(T_1,\,c_1;c_2,\dots,c_n,d)\\
		&\,=\, \cdots\,=\,\D(T_n,\,c_n;d,c_1,\dots,c_{n-1}),
	\end{align*}
	where $T_i$ is the rooted ribbon $n$-tree where the underlying ribbon tree structure is the same as $T$ but the $i$-th leaf $(r_T)_i$ of $T$ is regarded as the root. Thus, applying Proposition\,\ref{prop4.6}(1) for each side of the equations above, we obtain the inequalities\,(\ref{eq4.17}). This proves the lemma.
	\qed
\end{proof}

To prove Propositions\,\ref{prop4.10}, we prepare some definitions and lemmas below.

\begin{defi}\label{defi4.15}
	Suppose $\M(d;\bm{c})\neq\emptyset$. 
	
	\noindent
	(1) We define the functions $b_e:\M(d;\bm{c})\to\R$ for any edge $e\in E(T)$ in the same way as Definition\,\ref{defi4}. 
	
	\noindent
	(2) We define the following set:
	\[
	 B(T,\,d;\bm{c})\,=\,\{\varphi:E(T)\to \Z_{\geq 0}\,\bigl|\,\varphi=b_{\bullet}[\bm{u}]\text{ for some polygon } [\bm{u}]\in\M(d;\bm{c})\,\},
	\]
	where $b_{\bullet} [\bm{u}]$ means the function $(e\mapsto b_e [\bm{u}]\,)$. 
	
	\noindent
	(3) If $n\geq 3$, we also define the map $\pi^{d;\bm{c}} _T:\M(d;\bm{c})\to\R^{n-2}$ in the same way as Definition\,\ref{defi4.1}. (Note that the number $\#\Ei(T)$ is equal to $n-2$ since $T$ has $n$ leaves.)

\end{defi}
\begin{lemm}\label{lem4.22}
Suppose $\M(d;\bm{c})\neq\emptyset$. The functions $b_e$'s given in Definition\,\ref{defi4.15}(1) also have the same properties (1-2) in Lemma\,\ref{lem4.2}, that is, we have the followings:

\noindent
$(1)$ $|b_{e^{\prime}}-b_{e^{\prime\prime}}|\leq b_e \leq b_{e^{\prime}}+b_{e^{\prime\prime}}$ if $e$, $e^{\prime}$ and $e^{\prime\prime}$ are adjacent to a common vertex.
		
	\noindent
	$(2)$ $b_e$ is the constant function with value $d$ (resp. $c_i$) if $e$ is adjacent to the root vertex (resp. the $i$-th leaf vertex).

\end{lemm}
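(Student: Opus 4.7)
The plan is to mimic the proof of Lemma\,\ref{lem4.2} essentially verbatim, with the only changes being a shift of the index set from $\{0,\dots,n-1\}$ to $\{0,1,\dots,n\}$ and the constraint $u_0+\cdots+u_{n-1}=0$ replaced by $u_0+u_1+\cdots+u_n=0$. Since the bending functions $b_e$ are defined by the same formula in terms of the partition $I_e\sqcup I_e'$ induced by cutting $e$, the combinatorial and metric arguments transfer without essential change.

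For (1), I would fix a representative $\bm{u}\in S^2(d)\times S^2(c_1)\times\cdots\times S^2(c_n)$ with $u_0+u_1+\cdots+u_n=0$ and work at the (necessarily internal, hence trivalent) common vertex $v$ of $e,e',e''$. Removing $v$ disconnects $T$ into three components, one past each of the three edges, inducing a partition $\{0,1,\dots,n\}=A_e\sqcup A_{e'}\sqcup A_{e''}$, where $A_f$ is the set of external indices in the component past $f$. For each $f\in\{e,e',e''\}$, one of $I_f,I_f'$ coincides with $A_f$, so $b_f[\bm u]=\bigl\|\sum_{i\in A_f}u_i\bigr\|$. The linear relation $\sum_{A_e}u_i+\sum_{A_{e'}}u_i+\sum_{A_{e''}}u_i=0$ together with the triangle inequality in $\R^3$ then yields $b_e\leq b_{e'}+b_{e''}$ and the two symmetric bounds, giving $|b_{e'}-b_{e''}|\leq b_e\leq b_{e'}+b_{e''}$.

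For (2), I would use that the root vertex $v_T$, being adjacent to the external half-edge $r_T$, has valence one, so the only edge adjacent to $v_T$ is the root edge $e$; cutting $e$ yields the partition $I_e=\{0\}$ and $I_e'=\{1,\dots,n\}$ (or vice versa), whence $b_e[\bm u]=\|u_0\|=d$ identically on $\M(d;\bm{c})$. The case of an edge $e$ adjacent to the $i$-th leaf vertex is strictly analogous, with $\{i\}$ in place of $\{0\}$ and $\|u_i\|=c_i$ in place of $\|u_0\|=d$.

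The argument presents no real obstacle; the only bookkeeping to verify is the correspondence between $I_f,I_f'$ and the three parts $A_e,A_{e'},A_{e''}$ at the common vertex, which is a formal check identical to the one implicit in the proof of Lemma\,\ref{lem4.2}.
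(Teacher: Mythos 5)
Your proposal is correct and matches the paper's approach: the paper simply observes that the proof of Lemma\,\ref{lem4.2} carries over unchanged when some $u_i=0$, and your write-up is just that argument spelled out in detail (with the partition $I_e\sqcup I'_{e'}\sqcup I'_{e''}$ from the paper's proof of Lemma\,\ref{lem4.2} re-packaged as $A_e\sqcup A_{e'}\sqcup A_{e''}$).
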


\begin{proof}
The argument in the proof of Lemma\,\ref{lem4.2} is valid even if $u_i=0$ for some $i$. 
\qed
\end{proof}
\begin{lemm}\label{lemm4.4}
	If $\M(d;\bm{c})\neq\emptyset$ and $n\geq 3$, then we have $\#\,B(T,\,d;\bm{c})=\#\,\mathrm{Im} (\pi^{d;\bm{c}} _T) \cap \Z^{n-2}$.
\end{lemm}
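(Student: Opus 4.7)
The plan is to construct an explicit bijection
\[
\Phi\colon B(T,d;\bm{c}) \longrightarrow \mathrm{Im}(\pi^{d;\bm{c}} _T) \cap \Z^{n-2},
\qquad
\varphi \longmapsto \bigl(\varphi(\lambda(1)),\dots,\varphi(\lambda(n-2))\bigr),
\]
sending each admissible labeling to its tuple of values on the internal edges. The only nontrivial input is Lemma~\ref{lem4.22}(2): on an external edge $e$ the function $b_e$ is the constant $d$ (if $e$ is adjacent to the root vertex) or $c_i$ (if $e$ is adjacent to the $i$-th leaf vertex).

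First I would check well-definedness and injectivity. If $\varphi = b_\bullet[\bm{u}]$ for some $[\bm{u}] \in \M(d;\bm{c})$, then $\Phi(\varphi) = \pi^{d;\bm{c}} _T[\bm{u}] \in \mathrm{Im}(\pi^{d;\bm{c}} _T)$ by the definition of the bending system, and each coordinate is an integer since $\varphi$ is $\Z_{\geq 0}$-valued. For injectivity, Lemma~\ref{lem4.22}(2) pins down the value of $\varphi$ on every external edge in terms of the fixed data $d,\bm{c}$, so $\varphi$ is entirely determined by its restriction to $\Ei(T)$, on which $\Phi$ is just the identification via $\lambda$.

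For surjectivity, given $\bm{k} = (k_1,\dots,k_{n-2}) \in \mathrm{Im}(\pi^{d;\bm{c}} _T) \cap \Z^{n-2}$, choose $[\bm{u}] \in \M(d;\bm{c})$ with $\pi^{d;\bm{c}} _T[\bm{u}] = \bm{k}$ and set $\varphi := b_\bullet[\bm{u}]$. Since each $b_e$ is a norm, $\varphi(e) \geq 0$ for all $e \in E(T)$; the values on external edges are the non-negative integers $d, c_1, \dots, c_n$ by Lemma~\ref{lem4.22}(2); and the values on the internal edges $\lambda(j)$ are the integers $k_j$. Hence $\varphi$ takes values in $\Z_{\geq 0}$, so $\varphi \in B(T,d;\bm{c})$ and $\Phi(\varphi) = \bm{k}$. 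The argument never encounters a serious obstacle: the whole lemma is really an assertion that the bending Hamiltonians on external edges carry no information beyond the prescribed edge lengths, so the bending system on internal edges records precisely the variable content of $b_\bullet[\bm{u}]$.
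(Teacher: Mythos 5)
Your proof is correct and follows exactly the same approach as the paper: both use the evaluation map $\varphi\mapsto\bigl(\varphi(\lambda(1)),\dots,\varphi(\lambda(n-2))\bigr)$ and observe via Lemma\,\ref{lem4.22}(2) that the external-edge values are fixed by $d$ and $\bm{c}$, so the internal-edge values determine $\varphi$ and every lattice point in the image of $\pi^{d;\bm{c}}_T$ is hit. Your version simply spells out the injectivity and surjectivity checks that the paper leaves implicit.
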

\begin{proof}
	Let $\lambda:\{1,\dots,n-2\}\to\Ei(T)$ be a fixed numbering. We find that the bijective correspondence is given by the following map: 
	\[
	B(T,\,d;\bm{c})\longrightarrow \mathrm{Im} (\pi^{d;\bm{c}} _T) \cap \Z^{n-2}\ ;\,\ \varphi\longmapsto \bigl((\varphi\circ\lambda)(1),\dots,(\varphi\circ\lambda)(n-2)\bigr).
	\]
	Indeed, the values $\varphi(e)$ for any $e\in E(T)\setminus\Ei(T)$ are determined by $d$ or $\bm{c}=(c_1,\dots, c_n)$ from the property (2) in Lemma\,\ref{lem4.22}, which shows that the map above is bijective.   
	\qed
\end{proof}

\begin{lemm}\label{lem4.12}
	Suppose $\M(d;\bm{c})\neq\emptyset$. Then we have $\D(T,\,d;\bm{c})= B(T,\,d;\bm{c})$. 
\end{lemm}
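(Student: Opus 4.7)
The plan is to prove the two inclusions $B(T,d;\bm{c})\subseteq\D(T,d;\bm{c})$ and $\D(T,d;\bm{c})\subseteq B(T,d;\bm{c})$ separately. The first is essentially immediate from Lemma\,\ref{lem4.22}: any $\varphi\in B$ is by construction a $\Z_{\geq 0}$-valued function on $E(T)$, and parts (1) and (2) of Lemma\,\ref{lem4.22} are precisely the two axioms of Definition\,\ref{defi4.6}, so $\varphi\in\D$.

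For the harder inclusion $\D\subseteq B$, I would proceed by induction on $n$, mirroring the grafting decomposition used in Proposition\,\ref{prop4.16}. The base cases $n=1,2$ are immediate from Lemma\,\ref{lem4.4}: the set $\D(T,d;\bm{c})$ is either empty or a singleton whose nonemptiness coincides with condition\,(\ref{eq4.17}), and in the nonempty case an explicit polygon (two antipodal vectors for $n=1$; a closed triangle in $\R^3$ with side-lengths $d,c_1,c_2$ for $n=2$) realizes the labeling, with $b_\bullet[\bm{u}]=\varphi$ forced by Lemma\,\ref{lem4.22}(2) since all edges are external. For $n\geq 3$, Proposition\,\ref{prop2.6} yields $T\cong S\circ_i S'$ with subtrees of strictly fewer leaves $m,m'\geq 2$. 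Given $\varphi\in\D(T,d;\bm{c})$, setting $k=\varphi(e^*)$ on the gluing edge $e^*$, Lemma\,\ref{lem4.5} provides $\varphi|_{E(S)}\in\D(S,d;\bm{c}^{i,m';k})$ and $\varphi|_{E(S')}\in\D(S',k;\bm{c}_{i,m'})$, and the induction hypothesis produces polygons $[\bm{u}^S]$, $[\bm{u}^{S'}]$ realizing these restrictions.

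The key step is the gluing. Since $\|u_i^S\|=k=\|u_0^{S'}\|$, the $SO(3)$ freedom on $[\bm{u}^{S'}]$ lets me choose a representative with $u_0^{S'}=-u_i^S$ (trivial when $k=0$, a standard rotation otherwise). I then splice the two edge-sequences by removing $u_i^S$ and $u_0^{S'}$ and re-indexing according to the combinatorics of $T=S\circ_i S'$. The closure condition $\sum_j u_j^T=0$ follows from the identity $\sum_{l=1}^{m'}u_l^{S'}=-u_0^{S'}=u_i^S$, so $[\bm{u}^T]\in\M(d;\bm{c})$. The verification $b_e[\bm{u}^T]=\varphi(e)$ splits on whether $e$ belongs to $E(S)$ or $E(S')$, and further on whether the leaf index $i$ lies in $I_e^S$ (resp.\ $0\in I_e^{S'}$): in every case the sum over $I_e^T$ collapses to the sum over $I_e^S$ in $\bm{u}^S$ (or the analog for $S'$), via the same identity $\sum_l u_l^{S'}=u_i^S$, while the gluing edge $e^*$ itself is covered by the choice $k=\varphi(e^*)$.

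The main obstacle is this gluing construction, specifically the index-set bookkeeping for $I_e$ under the grafting and the consistent alignment of the two sub-polygons along the shared edge. The degenerate case $k=0$ needs a brief separate remark but poses no real difficulty, since both vectors $u_i^S$ and $u_0^{S'}$ vanish and any rotation suffices.
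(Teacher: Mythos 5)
Your proposal is correct and follows essentially the same route as the paper's own proof: the inclusion $B\subseteq\D$ via Lemma\,\ref{lem4.22}, induction on $n$ for $\D\subseteq B$ with base case(s) from Lemma\,\ref{lem4.4}, the grafting decomposition $T\cong S\circ_i S'$ from Proposition\,\ref{prop2.6}, restriction of the labeling via Lemma\,\ref{lem4.5}, and gluing of the two inductively obtained sub-polygons by an $SO(3)$ rotation aligning the common edge (with the $k=0$ case handled trivially). The only cosmetic difference is that you also note the $n=1$ base case explicitly, which the paper omits since its induction begins at $n=2$.
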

\begin{proof}
	By the properties (1-2) in Lemma\,\ref{lem4.22}, we have $\D(T,\,d;\bm{c})\supset B(T,\,d;\bm{c})$. Hereafter, we prove $\D(T,\,d;\bm{c})\subset B(T,\,d;\bm{c})$ by induction on $n\geq 2$. 
	
	First, we prove the case $n=2$. Let $\varphi$ be an admissible integral labeling of $T$ relative to $(d;c_1,c_2)$. Then it follows from Lemma\,\ref{lem4.4}(2) that $|c_1-c_2| \leq d \leq c_1 + c_2$, which guarantees existence of the triangle $[\bm{u}]\in\M(d;c_1,c_2)$. It is clear that $\varphi=b_{\bullet}[\bm{u}]$.
	
	From now on, we assume that the proposition holds for any $2\leq n^{\prime}<n$. Then there exist $m,m^{\prime}\geq 2$, a ribbon $m$-tree $S$, a ribbon $m^{\prime}$-tree $S^{\prime}$, and $1\leq i\leq m$ such that $T$ is isomorphic to $S\circ_i S^{\prime}$ by Proposition\,$\ref{prop2.6}$. Let $\varphi$ be an admissible integral labeling of $T=S\circ_i S^{\prime}$ relative to $(d;\bm{c})$. As in the proof of Lemma\,\ref{lem4.5}, we find that
	\[\varphi|_{E(S)}\in \D \bigl(S,\,d;\bm{c}^{i,m;k}\bigr)\text{ \ and \ }\varphi|_{E(S^{\prime})} \in \D \bigl(S^{\prime},\,k;\bm{c}_{i,m}\bigr)\] 
	for some $k\in \Z_{\geq 0}$ (may be $k=0$) and hence, we have
	\[\M(d;\bm{c}^{i,m^{\prime};k})\neq\emptyset\text{ \ and \ }\M(k;\bm{c}_{i,m^{\prime}})\neq\emptyset\] by Lemma\,\ref{lem4.15}. 
	Applying the induction hypothesis for $S$ and $(d;\bm{c}^{i,m^{\prime};k})$, and for $S^{\prime}$ and $(k;\bm{c}_{i,m^{\prime}})$ respectively, we have two polygons \[[\bm{v}=(v_0,\dots,v_m)]\in \M(d;\bm{c}^{i,m^{\prime};k})\text{ \ and \ }[\bm{w}=(w_0,\dots,w_{m^{\prime}})]\in\M(k;\bm{c}_{i,m^{\prime}})\] satisfying 
	\[
	 \varphi|_{E(S)} \,=\,b_{\bullet}[\bm{v}]\text{ \ and \ } \varphi|_{E(S^{\prime})} \,=\,b_{\bullet}[\bm{w}].
	\]
	Recall the notations\,(\ref{eq1.101}) and (\ref{eq1.102}) for $\bm{c}=(c_1,\dots, c_n)$ and $k$ again. Since $\|v_i\|=k=\|w_0\|$, we can take an element $g\in SO(3)$ satisfying $v_i=-\, g\cdot w_0$ (in the case $k=0$, we can take any $g\in SO(3)$) and 
	define the following $(n+1)$-gon: 
	\[
	  \bm{u}\,=\,(v_0,\dots,v_{i-1},\, g\cdot w_1,\dots,\, g\cdot w_{m^{\prime}},v_{i+1},\dots,v_m).
    \]
	It is easy to see that $\bm{u}$ is indeed a $(n+1)$-gon with edge-lengths $(d;\bm{c})$ and $\varphi=b_{\bullet}[\bm{u}]$ (see Figure\,\ref{fig7}). Now we have shown $\D \bigl(S\circ_i S^{\prime},\,d;\bm{c}\bigr)\subset B\bigl(S\circ_i S^{\prime},\,d;\bm{c}\bigr)$. Thus we obtain $\D(T,\,d;\bm{c})\subset B(T,\,d;\bm{c})$. 
	\qed
\end{proof}

\begin{figure}[!h]
\centering
\begin{tikzpicture}[scale=0.6, transform shape, line width=0.7pt]
\draw  (4.7,6.2)--(3,6.5)--(1,5)--(1.2,3)--(2.7,2.1);
\draw (1.1,3.8)node[left]{$v_1$};
 \draw (1.8,5.6)node[above left]{$v_0$};
 \draw (3.7,6.4)node[above right]{$v_m$};
 \draw (5.7,2.9)node[below right]{$v_i=-g\cdot w_0$};
 \draw[loosely dotted] (2.7,2.1)--(5,2.2);
 \draw[loosely dotted] (4.7,6.2)--(6.3,3.7);
\draw (5,2.2)--(6.3,3.7);
\draw (6.3,3.7)--(8,3.7)--(9.3,2.2);
\draw (5,2.2)--(5.6,0.3)--(7,-0.2);
\draw (7.4,3.8)node[above]{$g\cdot w_{m^{\prime}}$};
\draw (5.2,1.2)node[left]{$g\cdot w_1$};
 \draw[loosely dotted] (9.1,2)--(7.1,-0.1);
\end{tikzpicture}
\caption{the polygon $\bm{u}$.}\label{fig7}
\end{figure}

Now we are ready to prove Proposition\,\ref{prop4.10}.
\begin{shomeiy}
	Replacing $n$ in Lemmas\,\ref{lemm4.4} and \ref{lem4.12} by $n-1$, and $(d;\bm{c})$ by $(r_0;r_1,\dots, r_{n-1})=\bm{r}$, then we obtain 
	\[
	  \#\D(T,\bm{r})\,=\,\#B(T,\bm{r})\,=\,\#\,\mathrm{Im} (\pi^{\bm{r}} _T) \cap \Z^{n-3}.
	\]
	\qed
\end{shomeiy}

At the end of this subsection, we derive Kamiyama's recurrence relation mentioned in Section\,$1$ from our morphism $\fr:\Tt\to\W(\Z_{\geq 0})$. 

\begin{rem}\label{rem4.20}
	In the proof of Theorem\,\ref{thm0}, Kamiyama considered the bending system $\pi^{(i,1,\dots,1)}:\M(i,1,\dots,1)\to\R^{n-3}$ for $n\geq 4$ and $1\leq i\leq n-1$, and derived the recurrence relation for the number $\beta_{n,i}$ of the lattice points in its image. Here is the recurrence relation:	
	\begin{equation}
	  \beta_{n,i}\,=\, \left\{
\begin{array}{ll}
\beta_{n-1,1} & \text{if \,} i=0,\\
\beta_{n-1,i-1}+ \beta_{n-1,i}+ \beta_{n-1,i+1} & \text{if \,} 1\leq i\leq n-1,\\
0 & \text{if \,} n\leq i,
\end{array}
\right. \label{eq4.20}
	\end{equation}
	for any $n\geq 4$ and $i\geq 0$. 
	
	In our operadic formulation, Kamiyama's recurrence relation\,(\ref{eq4.20}) arises from the fact that caterpillars have the canonical grafting decomposition: 
	\begin{equation}
	C_{n-1} = C_2\circ_2 C_{n-2} \text{ \ for each } n\geq 4, \label{eq4.200}
	\end{equation}
	where $C_n$ is the isomorphism class of the $n$-caterpillars in Example\,\ref{ex2.6}. 
	
	First, by Remark\,\ref{rem4.4} and Proposition\,\ref{prop4.19}, we have  
	\[
	  \beta_{n,i}\,=\,\bigl(\,{(\fr)}_{n-1}\,C_{n-1}\,\bigr)(i;1,\dots,1)
	\]
	for any $n\geq 4$ and $i\geq 0$. Therefore, we have $\beta_{n,i}=0$ if $n\leq i$ by Proposition\,\ref{prop4.6}(1). 
	
	On the other hand, we have 
	\begin{align}
	  &\bigl(\,{(\fr)}_{n-1}\,C_{n-1}\,\bigr)(i;1,\dots,1)\notag\\
	  & \hspace{20mm} =\bigl(\,{(\fr)}_{n-1}\,C_2\circ_2 C_{n-2}\ \bigr)(i;1,\dots,1)
	  \notag\\& \hspace{20mm}= \bigl(\,\bigl({(\fr)}_2\,C_2\bigr)\circ_2 \bigl({(\fr)}_{n-2}\,C_{n-2}\bigr)\,\bigr)(i;1,\dots,1)\vspace{1mm}\notag\\
	  & \hspace{20mm} =\sum_{k\in \Z_{\geq 0}} \bigl(\,{(\fr)}_2\,C_2 \,\bigr)(i;1,k)\cdot \bigl(\,{(\fr)}_{n-2}\,C_{n-2} \,\bigr)(k;1,\dots,1)\label{eq4.21}
	\end{align}
	and 
	\begin{equation}
	  \bigl(\,{(\fr)}_2\,C_2 \,\bigr)(i;1,k)\,=\, \left\{
\begin{array}{ll}
1 & \text{if \,} |i-1|\leq k\leq i+1,\\
0 & \text{otherwise \,}
\end{array}
\right. \label{eq4.22}
	\end{equation} 
	by Proposition\,\ref{lem4.4}(2). Therefore, we have the relation\,(\ref{eq4.20}) for the other cases where $0\leq i\leq n-1$. 
	
	Thus, we have reproduced Kamiyama's recurrence relation\,(\ref{eq4.20}) using our operadic formulation.
\end{rem}

\section{The proof of the main theorem}

Now we complete the proofs of Theorem\,\ref{thm1.1} and Corollary\,\ref{cor1.2}.\,\,Recall that $\p$ is the morphism of operads given in Example\,\ref{ex2.14}. 
\begin{shomei}
	For the K\"aler polarization on the polygon spaces, we have constructed the morphism of operads $\fk:\Tc\to\W(\Z_{\geq 0})$ in Definition\,\ref{def3.5} and Proposition\,\ref{prop3.6}. 
	
	On the other hand, for the real polarization on the polygon spaces, we have also constructed the morphism of operads $\fr:\Tt\to\W(\Z_{\geq 0})$ in Definition\,\ref{def4.10} and Proposition\,\ref{prop4.16}. 
	
	Furthermore, we have ${(\p^* \fk)}_2={(\fr)}_2$ by Lemmas\,\ref{prop3.7} and \ref{lem4.4}(2) and hence we obtain $\p^* \fk=\fr$ by Corollary\,\ref{cor2.17}. 
Now the proof of Theorem\,\ref{thm1.1} is completed. 
	\qed
\end{shomei}
\begin{shomeip}
	Let $T$ and $\bm{r}=(r_0,\dots,r_{n-1})$ as in Corollary\,\ref{cor1.2}. By Propositions\,\ref{prop3.5} and \ref{prop4.19}, the both sides have been described as
	\[\dim H^0\bigl(\M(\bm{r}),\Oh_{\Li(\bm{r})}\bigr)=\bigl(\,{(\fk)}_{n-1} (n-1)\,\bigr) (r_0;r_1,\dots, r_{n-1}),\]  
	\[\#\,\mathrm{Im} (\pi^{\bm{r}} _T) \cap \Z^{n-3}=\bigl(\,{(\fr)}_{n-1} [T]\,\bigr) (r_0;r_1,\dots, r_{n-1}).\] 
	
	On the other hand, it follows from Theorem\,\ref{thm1.1} that
		\[
	{(\fk)}_{n-1} (n-1)\, =\,{(\p^* \fk)}_{n-1} [T]\,=\,{(\fr)}_{n-1} [T].
	\]
	Therefore, we obtain the corollary. 
	\qed
\end{shomeip}

Graduate School of Mathematics, Nagoya University, Nagoya, Japan

\textit{E-mail address:} \texttt{m17024d@math.nagoya-u.ac.jp}


\begin{thebibliography}{8}

\bibitem{brav}
M. Braverman, 
\textit{Cohomology of the Mumford quotient}, 
In: Quantization of singular symplectic quotients, Progr. Math., {\bf 198}, Birkha$\mathrm{\ddot{u}}$ser, Basel (2001), 47-59.

\bibitem{GS}
V. Guillemin and S. Sternberg, 
\textit{Geometric quantization and multiplicities of group representations}, 
Invent. Math., {\bf 67} (1982), 515-538. 

\bibitem{GC}
V. Guillemin and S. Sternberg, 
\textit{The Gelfand-Cetlin system and quantization of the complex flag manifolds}, 
J. Funct. Anal., {\bf 52} (1983), 106-128. 

\bibitem{JW}
L. C. Jeffrey and J. Weitsman,
\textit{Bohr-Sommerfeld orbits in the moduli space of flat connections and the Verlinde dimension formula},
Comm. Math. Phys., {\bf 150} (1992), 593-630.

\bibitem{JW2}
L. C. Jeffrey and J. Weitsman,
\textit{Toric structures on the moduli space of flat connections on a riemann surface: volumes and the moment map},
Adv. Math., {\bf 106} (1994), 151-168.

\bibitem{kami}
Y. Kamiyama,
\textit{The cohomology of spatial polygon spaces with anticanonical sheaf},
Int. J. Appl. Math., {\bf 3} (2000), 339-343. 

\bibitem{kapo}
M. Kapovich and J. Millson,
\textit{The symplectic geometry of polygons in Euclidean space}, J. Differential Geom., {\bf 44} (1996), 479-513.

\bibitem{kiri}
A. A. Kirillov, 
\textit{Geometric Quantization}, 
Encycl. of Math. Sciences, Dynamical Systems vol. 4, Springer, 1990, pp. 138-172. 

\bibitem{kly}
A. Klyachko, 
\textit{Spatial polygons and stable configurations of points in the projective line},
Algebraic geometry and its applications (Yaroslaevl', 1992), Vieweg, 1994, pp. 67-84.

\bibitem{Mei}
E. Meinrenken, 
\textit{Symplectic surgery and the ${Spin}^c$-Dirac operator}, 
Adv. in Math., {\bf 134} (1998), 240-277.

\bibitem{Markl2}
M. Markl,
\textit{Operads and PROPs},
Handbook of algebra, vol. 5, Elsevier, 2008, pp. 87-140.

\bibitem{Markl}
M. Markl, S. Shnider, and J. Stasheff, 
\textit{Operads in Algebra, Topology and Physics},
Mathematical Surveys and Monographs, vol. 96, Amer. Math. Soc., Providence, RI, 2002.

\bibitem{taka}
T. Takakura,
\textit{Intersection theory on symplectic quotients of products of spheres},
Internat. J. Math., {\bf 12} (2001), 97-111.

\bibitem{Tian}
Y. Tian and W. Zhang,
\textit{An analytic proof of the geometric quantization conjecture of Guillemin-Sternberg},
Invent. Math., {\bf 132} (1998), 229-259.

\bibitem{YS}
T. Yamanouchi and M. Sugiura,
\textit{Introduction to Continuous Group Theory},
Baifukan, 1960, in Japanese.

\end{thebibliography}
\end{document}